\documentclass[svgnames]{amsart}
\usepackage[english]{babel}
\usepackage[utf8]{inputenc}
\usepackage[margin=3cm]{geometry}

\usepackage{svg}
\usepackage{hyperref}
\hypersetup{colorlinks,citecolor=Black,linkcolor=Black,urlcolor=Blue}
\usepackage[nameinlink]{cleveref}

\usepackage{thmtools}

\usepackage{tabularx, ifthen}
\usepackage{amssymb, amsfonts, amsmath, amsthm, amscd, amsxtra, latexsym, mathabx}
\usepackage{epigraph}
\usepackage{enumitem}
\usepackage{tikz}\usetikzlibrary{cd}

\theoremstyle{plain}
\newtheorem{thm}{Theorem}[section] 
\newtheorem{lem}[thm]{Lemma} 
\AddToHook{env/lem/begin}{\crefalias{thm}{lem}}
\newtheorem{claim}[thm]{Claim}
\AddToHook{env/claim/begin}{\crefalias{thm}{claim}}
\newenvironment{claimproof}{\begin{proof}}{\end{proof}}
\newtheorem{prop}[thm]{Proposition}
\AddToHook{env/prop/begin}{\crefalias{thm}{prop}}
\newtheorem{cor}[thm]{Corollary}
\AddToHook{env/cor/begin}{\crefalias{thm}{cor}}

\AddToHook{env/corintro/begin}{\crefalias{thmintro}{corintro}}

\AddToHook{env/questintro/begin}{\crefalias{thmintro}{questintro}}

\theoremstyle{definition}
\newtheorem{defn}[thm]{Definition}
\AddToHook{env/defn/begin}{\crefalias{thm}{defn}}
\newtheorem{assumption}[thm]{Assumption}
\AddToHook{env/assumption/begin}{\crefalias{thm}{assumption}}
\newtheorem{rem}[thm]{Remark}
\AddToHook{env/rem/begin}{\crefalias{thm}{rem}}
\newtheorem{ex}[thm]{Example}
\AddToHook{env/ex/begin}{\crefalias{thm}{ex}}
\newtheorem{construction}[thm]{Construction}
\AddToHook{env/construction/begin}{\crefalias{thm}{construction}}

\AddToHook{env/remintro/begin}{\crefalias{thmintro}{remintro}}

\newcommand{\Z}{\mathbb{Z}}
\newcommand{\R}{\mathbb{R}}
\newcommand{\Cay}[2]{\operatorname{Cay}\left(#1,#2\right)}

\newcommand{\frakS}{\mathfrak S}
\newcommand{\diam}{\mathrm{diam}}
\newcommand{\C}{\mathcal C}

\newcommand{\dist}{\mathrm{d}}
\newcommand{\nest}{\sqsubseteq}
\newcommand{\propnest}{\sqsubsetneq}
\newcommand{\orth}{\bot}
\newcommand{\trans}{\pitchfork}

\newcommand{\Stab}[2]{\operatorname{Stab}_{#1}(#2)}

\newcommand{\N}{\mathcal{N}}

\newcommand{\gate}{\mathfrak{g}}

\newcommand{\link}[2]{\operatorname{Link}_{#1}\left(#2\right)}
\newcommand{\Cstar}[2]{\operatorname{Star}_{#1}\left(#2\right)}

\newcommand{\MCG}{\mathcal{MCG}^\pm}
\newcommand{\Aut}[1]{\operatorname{Aut}\left(#1\right)}

\newcommand{\tsh}[1]{\left\{\kern-.7ex\left\{#1\right\}\kern-.7ex\right\}}

\title[A continuum of coarse medians for MCG and RA*G]{A continuum of coarse medians for all mapping class groups and right-angled groups}

\author[G. Mangioni]{Giorgio Mangioni}
    \address{(Giorgio Mangioni) Maxwell Institute and Department of Mathematics\\ Heriot-Watt University\\ Edinburgh, UK\\ Orcid: 0000-0003-2868-5032}
    \email{gm2070@hw.ac.uk}

\begin{document}

\begin{abstract} It is shown that the mapping class group of any non-sporadic surface admits a continuum of equivariant coarse median structures, thus improving a previous theorem of the author. The same holds for a wide class of right-angled Artin and Coxeter groups, including examples of any dimension at least two, and RACGs which are not quasi-isometric to RAAGs; the coarse median structures in these cases are not induced by cocompact cubulations. The results are instances of a general procedure, which uses quasicocycles to flexibly modify the structure of a (suitable) hierarchically hyperbolic group.
\end{abstract}

\maketitle

\epigraph{Auream quisquis mediocritatem\\
diligit, tutus caret obsoleti\\
sordibus tecti, caret invidenda\\
sobrius aula.}
{Quintus Horatius Flaccus, \emph{Carmina}, II 10}

\section*{Introduction}

In order to capture the large-scale ``cubical'' geometry of mapping class groups and hyperbolic groups, in \cite{Bowditch_CM} Bowditch introduced the notion of a \emph{coarse median} on a group $G$, a ternary operation which, roughly, assigns ``coarse centres'' to geodesic triangles in a Cayley graph for $G$ and behaves like the combinatorial median in a CAT(0) cube complex. Two coarse median on the same group are equivalent if they lie within bounded distance from each other, and a \emph{coarse median structure} is an equivalence class of coarse medians. There is also a natural notion of equivariance for coarse median structures on a group, which roughly amounts to the fact that the coarse median is preserved by ``push-forward" under the elements of the group. See \Cref{sec:CM} for details.

It is natural to ask if a given group admits a unique (equivariant) coarse median structure. The question is further motivated by the fact that coarse median preserving automorphisms and their fixed point sets often enjoy better properties than general automorphisms \cite{Elia:coarse_med_preserving}, and all automorphisms are clearly coarse median preserving in a group with a unique coarse median structure. This paper proves that two families of groups which are ubiquitous in geometric group theory, namely mapping class groups and (several) right-angled Artin and Coxeter groups (RA*Gs for short), can be endowed with a plethora of coarse median structures. 

Let us start with mapping class groups. The following proves \cite[Conjecture G]{short_HHG:I} and positively answers a question of Wright (personal communications):
\begin{restatable}{thmintro}{mcg}\label{thmintro:mcg}
Let $S$ be an orientable surface of finite-type, of complexity at least $2$.  The extended mapping class group of $S$, where one allows orientation-reversing homeomorphisms, admits a continuum of equivariant coarse median structures.
\end{restatable}
\noindent The restriction on the complexity is necessary. Indeed, the mapping class group of a surface of complexity at most $1$ is hyperbolic, and therefore admits a unique equivariant coarse median structure by a result of Niblo, Wright, and Zhang \cite[Theorem 4.2]{NWZ} (see also \cite[Corollary 3.2]{FS_pantsgraph} for a self-contained proof). The case of the five-punctured sphere was already settled in \cite{short_HHG:I}; as we shall see later, this paper expands the techniques used there.

Our second main theorem concerns right-angled Coxeter groups. Recall that a RACG $W_\Gamma$ acts properly and cocompactly on its Davis complex $\Sigma_\Gamma$, which is a CAT(0) cube complex. As such, $W_\Gamma$ admits a \emph{cubical coarse median}, induced by pulling back the cubical median operation on $\Sigma_\Gamma$ via an equivariant quasi-isometry $W_\Gamma\to \Sigma_\Gamma$. This coarse median structure is \emph{canonical}, as it is invariant under all automorphisms by work of Fioravanti \cite[Proposition A]{Elia:coarse_med_preserving}. Furthermore, Fioravanti-Levcovitz-Sageev \cite[Theorem B]{CoarseCubicalRig} proved that any two cocompact cubulations on a RACG induce the same coarse median structure, under suitable conditions which are satisfied by ``generic" RACGs by work of Behrstock, \c{C}i\c{c}eksiz, and Falgas-Ravry \cite{BehrstockCiceksizFalgasRavry:connectivity}, and conjectured that any RACG admits finitely many cubical coarse median structure. Fioravanti even asked if RACGs admit finitely many coarse median structures, even beyond the cubical setting \cite[Question 4]{Elia:coarse_med_preserving}. This is not the case for a wide family of RACGs:

\begin{restatable}{thmintro}{coxeter}\label{thmintro_coxeter} 
Let $\Gamma$ be a finite simplicial graph. Suppose that there exist two vertices $v_-, v_+$ such that $\link{\Gamma}{v_+}=\link{\Gamma}{v_-}$ has at least three vertices, is not a join of two proper subgraphs, and is not contained in the link of any other vertex $w\neq v_+,v_-$. Then the right-angled Coxeter group $W_{\Gamma}$ admits a continuum of equivariant coarse median structures.
\end{restatable}
\noindent Since there are at most countably many cocompact cubulations, a cardinality argument implies that uncountably many of the new coarse medians are not cubical. Actually, \emph{none} of the new coarse median structures is cubical: indeed they all restrict to the standard one, induced by the Davis complex, on product of Dihedral parabolic subgroups (see \Cref{cor:new_median_is_the_same_on_DT_RACG}), and the only cubical coarse median with this property is the standard one, as a consequence of \cite[Theorem B(2)]{CoarseCubicalRig} (see e.g. the proof of \cite[Corollary C]{CoarseCubicalRig}).

The same result applies to right-angled Artin groups satisfying analogue combinatorial requirements, thus significantly contributing to the classification of which RAAGs admit unique coarse median structures \cite[Problem at page 2]{FS_pantsgraph}:

\begin{restatable}{thmintro}{artin}\label{thmintro_artin} 
Let $\Gamma$ be a finite simplicial graph. Suppose that there exists a vertex $v\in V(\Gamma)$ such that $\link{\Gamma}{v}$ has at least two vertices, is not a join of two proper subgraphs, and is not contained in the link of any other vertex $w\neq v$. Then the right-angled Artin group $A_{\Gamma}$ admits a continuum of equivariant coarse median structures. 
\end{restatable}

\noindent By \Cref{cor:new_median_is_the_same_on_DT_RAAG} below, all coarse medians above agree with the standard one, coming from the action on the universal cover of the Salvetti complex, on Abelian parabolic subgroups. In turn, \cite[Lemma 3.24]{CoarseCubicalRig} implies that none of the new coarse median structures is cubical.

In both theorems above, the conditions on $\Gamma$ are ``local'', in that they only involve vertices at distance at most two from $v_+$ and $v_-$ (resp. $v$)  in the path metric of $\Gamma$. In particular, \Cref{thmintro_coxeter} applies to RACGs of any dimension, such as the ones whose defining graphs are as in \Cref{fig:coxeter}. 

\begin{figure}[htp]
\centering
\includegraphics{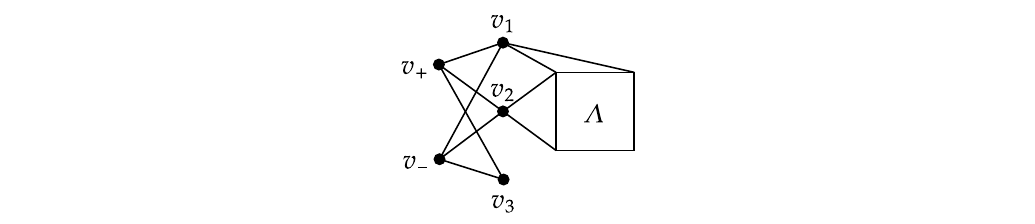}
\caption[alt=''A graph satisfying the requirements of Theorem B, which can contain any graph as an induced subgraph.'']{An example of a graph $\Gamma$ satisfying the requirements of \Cref{thmintro_coxeter}. Here $\Lambda$ is any finite simplicial graph, and some of its vertices are connected to $v_1$ and $v_2$. By suitably choosing $\Lambda$, one can also ensure that $W_\Gamma$ is one-ended.}
\label{fig:coxeter}
\end{figure}

In \cite{short_HHG:I}, I produced a continuum of equivariant coarse median structures for RAAGs on connected graphs without triangles and squares. Since every RAAG is commensurable to a RACG by a theorem of Davis-Januszkiewicz \cite{raag_into_racg}, this implied that several RACGs have uncountably many \emph{non-equivariant} coarse median structures, induced by pulling back the structures on RAAGs through the (not necessarily equivariant) quasi-isometries. Our Theorems~\ref{thmintro_coxeter} and~\ref{thmintro_artin} improve these results in two directions. Firstly, the coarse median structures for RACGs I build here are genuinely equivariant. Furthermore, the class of RA*Gs consider in this paper is significantly larger, as it includes examples of any dimension and RACGs which are not even quasi-isometric to RAAGs (for instance, one can choose $\Lambda$ in \Cref{fig:coxeter} in such a way that $\Gamma$ is not \emph{CFS}, in the sense of \cite{Dani_Thomas}, and in particular $W_\Gamma$ has at least cubical divergence by \cite{Dani_Thomas, BHS_Coxeter, Levcovitz}; on the other hand, one-ended RAAGs have at most quadratic divergence by \cite{Behrstock_Charney}).

\subsection*{Methods and tools} The above theorems are all instances of a general result, \Cref{thm:uncountably_many_CM}. Very roughly, given a finitely generated group $G$ satisfying \Cref{notation:tweakable_hhg}, we build a collection of Lipschitz projections from $G$ to certain \emph{quasilines}, i.e. metric spaces which are quasi-isometric to the real line; together with additional data, these are used to construct a coarse median structure on $G$ making the projections coarse median preserving (with respect to the natural median structure on $\R$). The quasilines themselves are constructed from suitable \emph{quasicocycles} on certain subgroups of $G$, which arise from actions on hyperbolic spaces. Changing the quasicocycles will then yield different coarse median structures. Let us expand on the various parts of the process.

\subsubsection*{Hierarchical hyperbolicity} We work in the framework of \emph{hierarchically hyperbolic groups} (HHG), first introduced by Behrstock-Hagen-Sisto in \cite{HHS_I}. Very roughly, a HHG is equipped with a family of coarsely Lipschitz \emph{coordinate projections} to Gromov-hyperbolic spaces $\pi\colon G\to \C U$, one for each domain $U$ in some index set $\frakS$: think of $\Z^2$ as a lattice in $\R^2$, with the projections to the coordinate axes. The domains are arranged in a partial order (indeed, a hierarchy), so that lower domains are ``peripheral" in higher domains, in analogy with the situation for relatively hyperbolic groups. For mapping class groups, domains are subsurfaces, the partial order is inclusion, and coordinate spaces are \emph{curve graphs}, as first introduced by Harvey \cite{Harvey}; the minimal layer of the structure corresponds to annuli, whose \emph{annular curve graphs} are quasilines with a slightly different definition (see Masur-Minsky \cite[Section 2.4]{MM2}).

As explained in \Cref{ex:coarse_median_from_HHS}, a HHG $(G,\frakS)$ admits an equivariant coarse median structure $\mu_\frakS$. The latter is characterised as the unique coarse median structure making all coordinate projections $\pi_U\colon G\to \C U$ coarse median preserving, with respect to the unique coarse median structure on each hyperbolic spaces $\C U$ (which roughly amounts to taking centroids of triangles). Changing the HHG structure might yield different coarse median structures, so the core of this paper is to exhibit a family of HHGs, including both mapping class groups and the RACGs from \Cref{thmintro_coxeter}, whose hierarchical structure can be flexibly modified.

\subsubsection*{Tweaking the hierarchy} 
More specifically, we start with a HHG $(G,\frakS)$ as in \Cref{notation:tweakable_hhg}, whose bottom layer of coordinate spaces consists of quasilines. In \Cref{prop:new_hhg_structures}, which is the technical heart of the paper, we then show how to replace the quasilines with suitable new ones, while keeping all other layers the same. The resulting HHG structures then produce different coarse median structure: this is proven in \Cref{thm:uncountably_many_CM}, following much of the same arguments as in \cite[Theorem 7.1]{short_HHG:I}. 

Instead of proving the most general result for HHGs, I preferred to tailor  \Cref{notation:tweakable_hhg} so that both mapping class groups and RA*Gs would satisfy them. Nonetheless, I expect that similar techniques, and possibly even \Cref{thm:uncountably_many_CM} as stated, should apply to several other HHGs of interest. 

\subsubsection*{Quasilines from quasicocycles}
The quasilines we feed into \Cref{prop:new_hhg_structures} are built as follows. We fix a minimal domain $\ell$, and consider the homomorphism $\sigma\colon \Stab{G}{\ell}\to \{\pm 1\}$ induced by the action on $\partial \C\ell=\{\pm \infty\}$. By results of Abbott, Balasubramanya, and Osin \cite{ABO}, later generalised by Balasubramanya, Fournier-Facio, and Genevois \cite{NL}, one can construct a quasiline $L$ with a $\Stab{G}{\ell}$-action from any non-trivial \emph{$\sigma$-quasicocycle}, i.e. a map $\theta\colon \Stab{G}{\ell}\to \R$ satisfying the cocycle relation (with respect to $\sigma$) up to a uniformly bounded error. The quasiline is built in a way that an element $g\in \Stab{G}{\ell}$ acts loxodromically precisely when $\theta$ is unbounded on $\langle g\rangle$.

Using the mapping class group as an example, to ensure that $L$ can replace the annular curve graph $\C \ell$, we roughly need that the Dehn twist $T$ around the annulus $\ell$ acts loxodromically on $L$, while every mapping class supported on a \emph{proper} subsurface of $U=S-\ell$ acts with uniformly bounded orbits. To this extent, we construct $L$ using the following quasicocycles. Let $g$ be a partial pseudo-Anosov element supported on $U$, which acts loxodromically on $\C U$. By \cite[Lemma 6.16]{DGO}, one can also assume that the maximal virtually cyclic subgroup $E(g)$ containing $g$ surjects onto $\Z$. By a result of Hull-Osin \cite{Hull_Osin}, the acylindricity of the $\MCG(U)$-action on $\C U$ allows one to build a quasicocycle $\psi\colon \Stab{\MCG(S)}{\gamma}\to \R$ for the trivial action (i.e. a \emph{quasimorphism}), such that $\psi$ extends the map $E(g)\to \Z$. The extension should be thought of as a generalisation of \emph{Brooks quasimorphisms} for free groups \cite{Brooks}, as for every $w\in \Stab{\MCG(S)}{\ell}$, $\phi(w)$ roughly counts how many times a geodesic in $\C U$ connecting a basepoint to its $w$-image fellow-travels the translates of the axis of $g$. Then the quasicocycles we use in the proof of \Cref{prop:new_hhg_structures} are linear combinations of $\psi$ and the \emph{Busemann quasicocycle} for the action on the original $\C \ell$, roughly mapping each element to its stable translation length. Different linear combinations then yield different HHG structures, and in turn non-equivalent coarse medians. 

\subsection*{Where do the new coarse medians differ?} 
Let us first stress that the existence of a quasiline in the bottom layer is crucial to our result. For instance, the \emph{pants graph} of a surface, which is quasi-isometric to Teichm\"{u}ller space with the Weil-Petersson metric, admits a HHS structure obtained from that of the associated mapping class group by dropping annular curve graphs. Quite surprisingly, Fioravanti and Sisto recently proved that pants graphs of low-complexity surfaces admit unique coarse median structures \cite{FS_pantsgraph}, and experts believe that this should be true for all surfaces.

The HHG structures built here only differ for the quasilines attached to minimal domains; in particular, the associated coarse medians, which make projections coarse median preserving, all have the same projections to non-minimal domains. Furthermore, in \Cref{cor:new_median_is_the_same_on_DT_RACG} and~\ref{cor:new_median_is_the_same_on_DT_RAAG}, I prove that the new coarse median structures on a RACG (resp. RAAG) restrict to the standard cubical one on products of Dihedral parabolic subgroups (resp. on Abelian parabolic subgroups). For mapping class groups, I suspect that \emph{any} coarse median structure on a mapping class group should restrict to the standard one on Dehn twist flats; I will explore this, and the question on uniqueness of coarse medians on pants graphs, in future work.

As the proof of \Cref{thm:uncountably_many_CM} clarifies, the place where the new coarse medians genuinely differ is the stabiliser of $\ell$. More precisely, in the mapping class group example, consider the subgroup $\langle T,g\rangle\le\Stab{\MCG(S)}{\ell}$, where $g$ is the partial pseudo-Anosov element we used to build the quasimorphisms $\theta_\lambda$. The idea is that, in the structure associated to $\theta_\lambda$, the coarse median of three elements $a,b,c\in \langle T,g\rangle$ is determined by which of $\theta_\lambda(a),\theta_\lambda(b),\theta_\lambda(c)$ projects between the other two; then, for any distinct $\lambda_1\neq\lambda_2$, we can suitably choose $a,b,c$ so that the middle images under $\theta_{\lambda_1}$ and $\theta_{\lambda_2}$ differ. The above is ultimately an instance of the fact, pointed out in e.g. \cite[Remark 4.8]{FS_pantsgraph}, that the product $\C\ell\times\C U\equiv \R\times \C U$ has no canonical coarse median structure, since one can modify the $\R$-projection by adding any unbounded Lipschitz function on $\C U$ (which in our case is roughly $\lambda\psi$).

\subsection*{Comparison to previous work} In \cite{short_HHG:I} I introduced \emph{short HHGs}, a broad class of ``two-dimensional" HHGs including the mapping class group of a five-punctured sphere, RAAGs on triangle- and square-free graphs, many surface-by-surface groups, and more. In the same paper I already explored the idea of replacing the quasilines of the HHG structure to construct non-equivalent coarse medians, and to do so by constructing suitable quasicocycles. However, the tools there crucially use that stabilisers of minimal domains are $\Z$-extensions of hyperbolic groups, both in the construction of the quasicocycles and in that of the HHG structure, so here I expand those techniques to account for the complications arising from ``higher-rank" stabilisers. Furthermore, the advantage of the arguments here is that they are self-contained and require as little background on HHGs as possible; on the other hand, in \cite{short_HHG:I} the construction heavily uses the machinery of \emph{combinatorial HHGs} from \cite{BHMS} and is interlaced with many other ``tweaking" procedures which were needed for applications to Dehn filling quotients in \cite{short_HHG:II}.

\subsection*{Outline of the article} \Cref{sec:background} gathers the background on the interaction between quasilines and quasicocycles (\Cref{sec:qline_from_qmorph}), the construction of quasimorphisms on acylindrically hyperbolic groups (\Cref{sec:qmorph_from_acyl}), hierarchical hyperbolicity (\Cref{sec:HHS}), and coarse medians (\Cref{sec:CM}). \Cref{sec:main_proof} is devoted to the proof of the main result on coarse median structures for HHGs, \Cref{thm:uncountably_many_CM}; the latter follows from the technical \Cref{prop:new_hhg_structures}, which shows how to tweak the HHG structure to insert new quasilines. The requirements of both are collected in \Cref{notation:tweakable_hhg}. In \Cref{sec:applications} we then apply \Cref{thm:uncountably_many_CM} to mapping class groups and RA*Gs, thus proving \Cref{thmintro:mcg},~\ref{thmintro_coxeter}, and~\ref{thmintro_artin}. \Cref{cor:new_median_is_the_same_on_DT_RACG} and~\ref{cor:new_median_is_the_same_on_DT_RAAG} finally show that the new coarse medians on a RA*G coincide with the cubical one when restricted to virtually Abelian parabolic subgroups, and therefore are not induced by any cocompact cubulation.

\subsection*{Acknowledgements}
I thank Elia Fioravanti, Jason Behrstock, Michah Sageev, and Alex Wright for repeatedly encouraging me to expand the tools from \cite{short_HHG:I} to tackle RACGs and all mapping class groups. I am also grateful to Alessandro Sisto for several exchanges on the content of this paper, and for numerous suggestions on how to improve both exposition and results.

\subsection*{AI declaration} No AI was used at any point in the intellectual and writing process, except for literature searches and typo corrections.

\section{Background}\label{sec:background}
\subsection{Group actions on metric spaces}
Throughout the paper, given a group $G$ generated by some subset $X$, we denote the associated Cayley graph by $\Cay{G}{X}$. For a subset $A$ of a metric space $(Z,\dist)$ and a constant $R\ge 0$, we denote the $R$-neighbourhood of $A$ in $Z$ by $N_R(A)$. Recall that a group $G$ acts on a geodesic metric space $(S,\dist)$
\begin{itemize}
    \item coboundedly if there exists $x_0\in S$ and $R\ge 0$ such that $S=\mathcal N_R(G\cdot x_0)$; when we want to emphasise the constant we say the action is $R$-cobounded;
    \item metrically properly if for every $x_0\in S$ and every $K\ge 0$, the set $\{g\in G\mid \dist(x_0,gx_0)\le R\}$ is finite;
    \item geometrically if the action is cobounded and metrically proper.
\end{itemize}
The following is the fundamental insight of geometric group theory: a geometric action on a metric space endows a group with roughly the same geometry.
\begin{lem}[{Milnor-Schwarz, \cite{Milnor,Schwarz}}]\label{lem:MilnorSchwarz}
    Let $G$ be a group acting $R$-coboundedly on a geodesic metric space $(S,\dist)$, and let $x_0\in S$ be a point whose orbit is $R$-dense in $S$. Then the set
    \[X\coloneq \{g\in G\mid \dist(x_0,gx_0)\le 2R+1\}\]
    generates $G$, and the orbit map $g\to gx_0$ is a $G$-equivariant quasi-isometry $\Cay{G}{X}\to S$.
\end{lem}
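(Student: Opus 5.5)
The plan is the standard proof of the Milnor--Schwarz lemma in its cobounded form: we show that $X$ generates $G$ and that the orbit map is coarsely Lipschitz, coarsely surjective, and coarsely injective with linear control. One remark first. As stated the lemma only assumes cobounded, but the quasi-isometry conclusion needs metric properness only for finiteness of $X$, that is, for local finiteness of the Cayley graph. Coarse bi-Lipschitz control in fact follows from coboundedness alone, with $\Cay{G}{X}$ possibly not locally finite. So I would prove the lemma exactly as stated. Equivariance is immediate, since $h\cdot(gx_0)=(hg)x_0$ and $G$ acts on $\Cay{G}{X}$ by left multiplication; coarse surjectivity is exactly $R$-density of the orbit.

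\textbf{Generation and lower bound.} Fix $g\in G$ and a geodesic $\gamma$ from $x_0$ to $gx_0$ of length $D$. Subdivide $\gamma$ into $n=\lceil D\rceil$ consecutive points $x_0=p_0,p_1,\dots,p_n=gx_0$ with $\dist(p_{i},p_{i+1})\le 1$. By coboundedness choose $g_i\in G$ with $\dist(p_i,g_ix_0)\le R$, taking $g_0=1$ and $g_n=g$. Then
\[\dist(x_0,g_i^{-1}g_{i+1}x_0)=\dist(g_ix_0,g_{i+1}x_0)\le 2R+1,\]
so $s_i\coloneq g_i^{-1}g_{i+1}\in X$ and $g=s_0s_1\cdots s_{n-1}$. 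Hence $X$ generates $G$, and $|g|_X\le \lceil \dist(x_0,gx_0)\rceil\le \dist(x_0,gx_0)+1$. By equivariance this gives $\dist_X(g,h)\le \dist(gx_0,hx_0)+1$.

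\textbf{Upper bound.} Each generator $s\in X$ moves $x_0$ by at most $2R+1$. The triangle inequality along a word then gives $\dist(gx_0,hx_0)\le (2R+1)\dist_X(g,h)$. Together with the lower bound and $R$-density of the orbit, the orbit map is a $(2R+1,1)$-quasi-isometric embedding with $R$-dense image, hence a quasi-isometry.

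The only step needing any care is the subdivision argument: we must match the endpoints exactly, by choosing $g_0=1$ and $g_n=g$, so that the word really represents $g$. The rest is routine.
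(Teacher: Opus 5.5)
Your proof is correct and is the standard chaining argument from the cited sources. The paper gives no proof of its own and simply cites Milnor and Schwarz; consistently with your remark, it later applies the lemma to cobounded but non-proper actions, for example to obtain the possibly infinite generating set $X_0$ in \Cref{construction:newline}.

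The only slip is the degenerate case $gx_0=x_0$ with $g\neq 1$. There $n=\lceil D\rceil=0$, so the requirements $g_0=1$ and $g_n=g$ clash and the empty word does not represent $g$. Taking $n=\max\{1,\lceil D\rceil\}$, or simply noting that $g\in X$ directly, fixes this and keeps the bound $|g|_X\le \dist(x_0,gx_0)+1$.
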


\subsection{Quasilines from quasicocycles}\label{sec:qline_from_qmorph}
This Subsection recalls the interaction between actions on \emph{quasilines}, i.e. spaces which are quasi-isometric to the real line, and $1$-\emph{quasicocycles}, that is, maps satisfying the quasicocycle relation up to a uniformly bounded error. Most of the material here is extracted from \cite{ABO,NL}.
\begin{defn}\label{defn:1_qc}
    Let $G$ be a group acting on $\R$ by group automorphisms, and let $\sigma\colon G\to \Aut{\R}\cong \Z/2\Z$ be the action. A \emph{$\sigma$-quasicocycle} is a map $f\colon G\to \R$ such that there exists a constant $D$, called the \emph{defect} of $f$, such that, for every $g,h\in G$,
    $$|f(g)+\sigma(g)(f(h))-f(gh)|\le D.$$
    A quasicocycle for the trivial action is called a \emph{quasimorphism}. Notice that, if $f$ is a $\sigma$-quasicocycle and $K=\ker(\sigma)$, which has index at most two in $G$, the restriction of $f$ to $K$ is a quasimorphism. 

    A $\sigma$-quasicocycle $f$ is always bounded on the orbit of any $g\in G-K$, because for every $n\in\mathbb{Z}$ one has
    $$|f(g^{2(2n+1)})|=|f(g^{2(2n+1)})-f(g^{2n+1})+f(g^{2n+1})|\le D.$$
    We say $f$ is \emph{unbounded} if there exists $h\in G$ such that $f|_{\langle h \rangle}$ is unbounded; such a $h$ must necessarily belong to $h$. 
\end{defn}

\begin{rem}[Homogeneisation]\label{defn:homogeneisation}
    A quasimorphism is \emph{homogeneous} if it restricts to a homomorphism on cyclic subgroups. Given any quasimorphism $f\colon K\to \R$ of defect $D$, the limit
    $$\phi(g)=\lim_{n\to+\infty}\frac{f(g^n)}{n}$$
    always exists, and defines a homogeneous quasimorphism of defect at most $2D$ such that $|\phi(g)-f(g)|\le D$ for all $g\in K$ (see e.g. \cite[Lemma 2.21]{calegari}); in particular, a quasimorphism is unbounded if and only if its homogeneisation is non-trivial. Homogeneous quasimorphisms are invariant under conjugation, meaning that $\phi(ghg^{-1})=\phi(h)$ for all $g,h\in K$.
\end{rem}

\begin{ex}[{Busemann quasicocycle, see \cite[Section 4.1]{Manning_actions_on_hyp}}]\label{ex:busemann}
Let $G$ be a group acting on a \emph{quasiline} $L$, that is, a space which is quasi-isometric to $\R$. The action on $\partial L$ gives a homomorphism $\sigma\colon G\to \Z/2\Z$, so let $K$ be its kernel. Given a sequence $\{x_n\}\in L$ converging to some $\xi\in \partial L$, the map $\beta_{\{x_n\}}\colon K\to \R$ defined by
  $$\beta_{\{x_n\}}(g)=\limsup_{n\to \infty} \dist(gx_0,x_n)-\dist(x_0,x_n),$$
is a quasimorphism. Its homogeneisation $\beta$ does not depend on the choice of the sequence, and has the property that an element $g\in K$ acts loxodromically on $L$ if and only if $\beta(g)\neq 0$. Furthermore, \cite[Proposition 2.10]{NL} shows that $\beta$ extends to a $\sigma$-quasicocycle $\phi\colon G\to \R$ by fixing $i\in G-K$ and setting $\phi(ki^n)=\beta(k)$ for every $k\in K$ and $n=0,1$. We stress that the extension is not canonical, though this will not matter for our purposes. 
\end{ex}

\noindent The following, which is essentially \cite[Lemma 2.14]{NL}, should be thought of as a converse of \Cref{ex:busemann}: up to bounded distance, every quasicocycle is the Busemann quasicocycle for the action on some quasiline. 
\begin{lem}\label{lem:quasiline_from_qc}
Let $\sigma\colon G\to \Z/2\Z$ be a homomorphism with kernel $K$, and let $\phi\colon G\to \R$ be an
unbounded $\sigma$-quasicocycle, which restricts to a homogeneous quasimorphism on $K$. Let $C\ge 0$ be
such that $D(\phi)\le C/2$ and $\phi(K)\cap (0,C/2)\neq \emptyset$; if $K\lneq G$ suppose also that $\phi(G-K)\cap [0,C/2)\neq \emptyset$. Let $X \coloneq \{g \in G \mid|\phi(g)| < C\}$. Then $X$ generates $G$, and the map $\phi\colon \Cay{G}{X}\to \R $ is a quasi-isometry. Furthermore:
\begin{enumerate}
            \item \label{item:qc1} $g\in G$ acts loxodromically on $\Cay{G}{X}$ if and only if $\phi$ is unbounded on $\langle g\rangle$.
            \item \label{item:qc2} $g\in G$ fixes the ideal endpoints of $\Cay{G}{X}$ if and only if $g\in K=\ker(\sigma)$.
            \item \label{item:qc3} If $H\le K$ is such that $|\phi|_{H}|$ is bounded, then $H$ acts on $\Cay{G}{X}$ with uniformly bounded orbits.
        \end{enumerate}
\end{lem}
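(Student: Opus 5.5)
The plan is to compare $\phi$ with the word metric coming from $X$, using only the $\sigma$-quasicocycle inequality $|\phi(gh)-\phi(g)-\sigma(g)\phi(h)|\le D\le C/2$ and homogeneity on $K$. I first record two elementary consequences. Plugging $g=h=1$ into the inequality gives $|\phi(1)|\le D$. Taking $h=g^{-1}$ then gives $|\phi(g^{-1})+\sigma(g)\phi(g)|\le 2D\le C$.

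\textbf{Generation and quasi-isometry.} I want to write every $g\in G$ as a product of elements of $X$. For $g\in K$, I choose $a\in K$ with $\phi(a)=t\in(0,C/2)$; by homogeneity, $\phi(a^n)=nt$. Let $n$ be the integer nearest to $\phi(g)/t$. Then the quasicocycle inequality, applied with $\sigma(a^{-n})=1$, yields
\[|\phi(a^{-n}g)|\le |\phi(g)-nt|+2D<C,\]
after adjusting constants slightly (for instance by shrinking the step via powers, or by absorbing the error into $C$). Hence $a^{-n}g\in X$, and so $g$ is a product of at most $|\phi(g)|/t+2$ generators. For $g\notin K$, I multiply by an element $i\in G-K$ with $\phi(i)\in[0,C/2)$; this $i$ lies in $X$. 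Since $gi\in K$, this reduces to the previous case. Together these give the upper bound $|g|_X\le A|\phi(g)|+B$. The lower bound $|\phi(g)|\le (C+D)|g|_X+|\phi(1)|$ follows by induction on word length using the cocycle inequality and $|\sigma|=1$. Thus $\phi$ is a quasi-isometric embedding. It is coarsely surjective because $\phi(a^n)=nt$ is $t$-dense. One caveat: the metric on $\Cay{G}{X}$ and the coordinate $\phi$ are related by $\dist(g,h)\approx|\phi(g^{-1}h)|$, which I must compare with $|\phi(h)-\phi(g)|$. Using the cocycle identity, $\phi(g^{-1}h)=\sigma(g^{-1})(\phi(h)-\phi(g))+O(D)$, so the comparison holds.

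\textbf{Items (1)--(3).} Item (1) follows because $\phi$ is a quasi-isometry, so the orbit $\langle g\rangle\cdot 1$ is unbounded in $\Cay{G}{X}$ exactly when $\phi(\langle g\rangle)$ is unbounded. An isometry of a quasiline with an unbounded cyclic orbit is loxodromic: parabolics cannot occur in $\R$-like spaces, since an unbounded orbit that does not escape linearly would force bounded $\phi$ by homogeneity. Concretely, $\phi(g^n)=n\phi(g)$ on $K$, and elements outside $K$ have bounded $\phi$ on $\langle g\rangle$ by Definition~\ref{defn:1_qc}. For item (2), left multiplication by $g$ in $\phi$-coordinates is $\phi(gx)\approx\phi(g)+\sigma(g)\phi(x)$, so $g$ preserves or swaps the two ends according to $\sigma(g)$. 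Item (3) follows because for $h\in H$, $\dist(1,h)\approx|\phi(h)|$ is uniformly bounded, so every orbit $Hx$ is bounded uniformly, with a bound depending only on $\sup|\phi|_H|$ and the constants.

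\textbf{Main obstacle.} I expect the main difficulty to be the careful constant bookkeeping in the generation step. In particular, I need to ensure that the remainder $a^{-n}g$ lands strictly inside $|\phi|<C$, and to use the hypothesis on $\phi(G-K)\cap[0,C/2)$ to handle the coset $G-K$.
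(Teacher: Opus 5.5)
\textbf{The gap is in item (3).} The statement asks for \emph{uniformly} bounded orbits: a bound on $\diam(H\cdot x)$ that does not depend on $x\in G$. Your argument bounds $\dist(1,h)=|h|_X\approx|\phi(h)|$, and this only controls the orbit of the identity. For an isometric action, one bounded orbit gives only $\diam(H\cdot x)\le \diam(H\cdot 1)+2\dist(1,x)$, which grows with $x$. For a general point you need $\dist(x,hx)=|x^{-1}hx|_X$, that is, control of $\phi$ on every conjugate $x^{-1}Hx$. So ``so every orbit is bounded uniformly'' does not follow from what you wrote.

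The repair is short and uses only tools you already have. Since $\sigma(h)=1$ for $h\in H\le K$, the quasicocycle inequality gives $|\phi(hx)-\phi(x)-\phi(h)|\le D$. Your quasi-isometry bound then controls $\dist(x,hx)$ in terms of $\sup_{h\in H}|\phi(h)|$, $D$ and the quasi-isometry constants, independently of $x$.

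The paper argues via homogeneity instead. Boundedness of $\phi$ on $H$, and hence (via the cocycle relation) on each conjugate $x^{-1}Hx\le K$, forces $\phi\equiv 0$ there. So every conjugate lies in $X$, and $\diam(H\cdot x)=\diam(x^{-1}Hx)\le 1$.

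This uniformity is exactly what is used later. In the partial realisation argument, powers of $T$ are applied to an arbitrary realisation point $g_0$, and the argument needs the orbits of $gTg^{-1}$ on $L_\lambda$ to be bounded independently of $g_0$.

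\textbf{The rest of the proposal is fine.} You prove the main statement directly (generation, the comparison $|g|_X\asymp|\phi(g)|$, and the conversion $\phi(g^{-1}h)=\sigma(g)(\phi(h)-\phi(g))+O(D)$), whereas the paper simply cites \cite[Lemma 2.14]{NL]}. Your version makes the lemma self-contained.

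Your items (1) and (2) match the paper. For (2), the paper tracks the specific sequence $gk^n$, which is your formula $\phi(gx)\approx\phi(g)+\sigma(g)\phi(x)$ applied to $x=k^n$.

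\textbf{One correction to the constant bookkeeping you flagged as the main obstacle.} Homogeneity gives $\phi(a^{-n})=-nt$ exactly, and $\sigma(a^{-n})=1$. A single application of the quasicocycle inequality therefore yields $|\phi(a^{-n}g)|\le t/2+D<C/4+C/2<C$, so no adjustment is needed. Absorbing errors into $C$ would not be legitimate in any case, since $C$ defines $X$.
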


\noindent The assumption that $\phi$ is homogeneous on $K$ is not restrictive, as up to bounded distance one can always replace $\phi|_K$ by its homogeneisation while keeping $\phi|_{G-K}$ the same.

        

\begin{proof}
    The main statement is \cite[Lemma 2.14]{NL}. For the ``furthermore" part, \Cref{item:qc1} follows immediately from the fact that $\phi$ is a quasi-isometry. Moving to \Cref{item:qc2}, let $g\in G$, let $k\in K$ such that $\phi|_{\langle k\rangle}$ is unbounded, and suppose without loss of generality that $\lim_{n\to +\infty} \phi(k^n)=+\infty$. Then
    \[\lim_{n\to+\infty} \phi(gk^n)=\lim_{n\to+\infty} \phi(gk^ng^{-1})=\lim_{n\to+\infty} \sigma(g)\psi(k^n)=\sigma(g)\infty,\]
    where for both equalities we used that $f$ is a $\sigma$-quasicocycle. Finally, if $H\le K$ and $\phi|_H$ is bounded, then for every $g\in G$ we also have that $\phi|_{gHg^{-1}}$ is bounded, as $\phi$ is a quasicocycle, and therefore  $\phi|_{gHg^{-1}}\equiv 0$ as $\phi$ restricts to a homogeneous quasimorphism on $K$. Therefore every conjugate of $H$ lies in $X$ by definition of the latter; in turn this proves \Cref{item:qc3}, since \[\diam_{\Cay{G}{X}}(H\cdot g)= \diam_{\Cay{G}{X}}(g^{-1}H\cdot g)\le 1.\qedhere\]
\end{proof}

\noindent If one starts with an action on a quasiline, the new quasiline associated to the quasimorphism is essentially the same as before:
\begin{cor}\label{cor:quasiline_from_Busemann_qc}
    Let $G$ be a group acting on a quasiline $(S,\dist)$ with a loxodromic element, and let $K$ be the kernel of the action on $\partial S$. Let $\phi\colon G\to \R$ be the Busemann quasicocycle for the action (associated to a choice of element $i\in G-K$), and let $L=\Cay{G}{X}$ be the quasiline from \Cref{lem:quasiline_from_qc} induced by $\phi$. Then $L$ and $S$ are $G$-equivariantly quasi-isometric.
\end{cor}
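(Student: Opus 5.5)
The plan is to build an explicit $G$-equivariant quasi-isometry $L\to S$ and compare both spaces with $\R$ through $\phi$. Fix a basepoint $x_0\in S$. By \Cref{lem:quasiline_from_qc}, the identity map of $G$ gives a quasi-isometry $\phi\colon L=\Cay{G}{X}\to\R$. So it suffices to show two things: the orbit map $o\colon G\to S$, $g\mapsto gx_0$, is coarsely Lipschitz from $L$, and it is coarsely bi-Lipschitz once we measure $\phi$ up to sign on a suitable identification $S\simeq\R$. Equivariance of $o$ is automatic. The main task is therefore to compare $\dist(gx_0,hx_0)$ with $|\phi(g)-\phi(h)|$, or rather with $|\phi(g^{-1}h)|$, up to multiplicative and additive constants.

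\textbf{Step 1: the Busemann function measures displacement.} Fix a quasi-isometry $q\colon S\to\R$ and a sequence $x_n\to\xi$ with $q(x_n)\to+\infty$. For $k\in K$, the horofunction $b(y)=\limsup_n(\dist(y,x_n)-\dist(x_0,x_n))$ satisfies $|b(y)+q(y)-q(x_0)|\le C_0$ up to rescaling. This is because $S$ is a quasiline: geodesics to $x_n$ coarsely travel monotonically in $q$. Consequently $\beta_{\{x_n\}}(k)=b(kx_0)$ coarsely equals $q(x_0)-q(kx_0)$. After homogenisation we get $|\phi(k)|\approx\dist(x_0,kx_0)$ up to uniform affine error. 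For $g=ki\in G-K$, $\phi(g)=\beta(k)$ and $\dist(x_0,gx_0)\le\dist(x_0,kx_0)+\dist(x_0,ix_0)$, so the estimate persists with an extra additive constant. The main subtlety here is that $K$ preserves the orientation of $S$. Hence $q(kx_0)-q(x_0)$ coarsely agrees with a signed translation, and the homogenisation error is bounded by the defect. I would verify this carefully, using that $\dist(y,x_n)-\dist(x_0,x_n)$ coarsely equals $q(x_0)-q(y)$ for all $n$ large.

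\textbf{Step 2: the orbit map is a quasi-isometry.} It is coarsely Lipschitz. Indeed, for $s\in X$ we have $|\phi(s)|<C$, and by Step 1 this gives $\dist(x_0,sx_0)\le C'$. Then $\dist(gx_0,gsx_0)\le C'$, and the triangle inequality along paths gives the bound. For the lower bound, Step 1 gives $\dist(gx_0,hx_0)=\dist(x_0,g^{-1}hx_0)\succeq|\phi(g^{-1}h)|$. Moreover $|\phi(g^{-1}h)|$ coarsely equals $\dist_L(g,h)$, because $L$ is a quasiline on which left multiplication is isometric and $\phi$ is a quasi-isometry. To see this, note that the quasicocycle relation gives $|\phi(g^{-1}h)-\sigma(g^{-1})(\phi(h)-\phi(g))|\le 2D$, roughly. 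Combined with \Cref{lem:quasiline_from_qc}, this yields $\dist_L(g,h)\asymp|\phi(h)-\phi(g)|\asymp|\phi(g^{-1}h)|$.

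\textbf{Step 3: coarse surjectivity.} This reduces to the orbit $Gx_0$ being coarsely dense in $S$. If the action is not cobounded, I would instead take the image to be the hull of the orbit. However, a loxodromic $g$ exists, so $\langle g\rangle x_0$ is a quasi-geodesic in the quasiline $S$ and is therefore coarsely dense. Hence the orbit map is coarsely surjective, and the two quasi-isometry estimates from Step 2 finish the proof.

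I expect Step 1 to be the main obstacle: making the horofunction comparison with $q$ uniform, and controlling the non-canonical extension on $G-K$.
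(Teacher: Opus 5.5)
Your proposal is correct and follows essentially the same route as the paper: the equivariant orbit map; a comparison of $|\phi(k)|$ with $\dist(x_0,kx_0)$ for $k\in K$ via the horofunction and the homogenisation error; transfer to $G-K$ through $g=ki$ and the triangle inequality; the fact that $\phi\colon L\to\R$ is a quasi-isometry; and coarse surjectivity from the loxodromic element. The only blemish is the detour through an auxiliary quasi-isometry $q\colon S\to\R$: for an arbitrary $q$ the additive bound $|b(y)+q(y)-q(x_0)|\le C_0$ fails even after rescaling, because a quasi-isometry may stretch different regions by different factors. You never need it, though, since for $n$ large $|\dist(y,x_n)-\dist(x_0,x_n)|$ is within a uniform constant of $\dist(x_0,y)$ directly by the geometry of the quasiline, which is exactly what the paper uses.
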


\begin{proof}
    Let $L\to S$ be the orbit map $g\to g\cdot x_0$, which is $G$-equivariant by construction and coarsely surjective because $S$ is a quasiline and $G$ contains a loxodromic element. It is therefore enough to show that the orbit map is a quasi-isometry; for the ease of notation, we shall write $A\asymp B$ to mean that there exist constants $K\ge 1,C\ge 0$ such that $B/K-C\le A\le BK+C$.

    We firstly prove that the orbit map restricts to a quasi-isometry on $K$. Recall that the Busemann quasimorphism on $k\in K$ is the homogeneisation of 
    \[\beta_{\{x\}}(k)=\limsup_{n\to+\infty}\dist(k\cdot x_0, x_n)-\dist(x_0,x_n),\]
    where $\{x_n\}_{n\in \N}$ is any sequence converging to a boundary point $\xi$. Notice that for every $n$ one has
    \[\dist(k\cdot x_0, x_0)\asymp |\dist(k\cdot x_0, x_n)-\dist(x_0, x_n)|\]
    since $S$ is a quasiline. Thus, passing to the limsup or liminf (depending on whether $\beta_{\{x_n\}}(k)$ is positive or negative), we get that
    \[\dist(k\cdot x_0, x_0)\asymp |\beta_{\{x_n\}}(k)|\asymp |\phi(k)|\asymp |k|_X,\]
    where we used the the definition of $\phi|_K$ as the homogeneisation of $\beta_{\{x_n\}}$, and the fact that $\phi\coloneq \Cay{G}{X}\to \R$ is a quasi-isometry. Now, if $g=ki^n$ for $n\in \{0,1\}$, then the triangle inequality plus the fact that $G$ acts isometrically on $S$ yield
    \[\dist(k\cdot x_0, x_0) -\dist(s\cdot x_0, x_0)\le \dist(g\cdot x_0, x_0)\le \dist(k\cdot x_0, x_0) +\dist(i\cdot x_0, x_0),\]
    so
    \[\dist(g\cdot x_0, x_0)\asymp \dist(k\cdot x_0, x_0)\asymp |k|_X.\]
    In turn, 
    \[|k|_X-|i|_X\le |g|_X\le |k|_X+|i|_X\]
    hence $|k|_X\asymp |g|_X$. This shows that $\dist(g\cdot x_0, x_0)\asymp |g|_X$, i.e. the orbit map is a quasi-isometric embedding. 
\end{proof}
\subsection{Quasimorphisms from acylindrical actions}\label{sec:qmorph_from_acyl} Here we recall how to extend quasicocycles from virtually cyclic subgroups of acylindrically hyperbolic groups.

\begin{defn}[\cite{Osin_AH}]
A group $G$ is \emph{acylindrically hyperbolic} if it is not virtually cyclic, it acts with unbounded orbits on a Gromov-hyperbolic metric space $(S,\dist)$, and the action is \emph{acylindrical}, i.e. for every $R\ge 0$ there exist $N>0,L>0$ such that for every $x,y\in S$ with $\dist(x,y)\geq L$ one has
    \[\#\{g\in G\mid \dist(x,gx)\leq R,\,\dist (y,gy)\leq R\}\leq N.\]
\end{defn}

\noindent Osin also proved that acylindrical hyperbolicity is equivalent to the existence of a non-trivial \emph{hyperbolically embedded subgroup}. Rather than giving the general definition of the latter, which can be found in \cite[Definition 4.25]{DGO}, we recall an equivalent characterisation in the case when the subgroup is hyperbolically embedded in a hyperbolic Cayley graph, which was independently proven in \cite[Theorem 3.9]{AMS} and \cite[Theorem 4.13]{Hull_small_cancellation}.
\begin{defn}
    Let $G$ be a group, and let $X$ be a generating set such that $\Gamma\coloneq \Cay{G}{X}$ is hyperbolic. A subgroup $H\le G$ is \emph{hyperbolically embedded} in $(G,X)$, denoted $H\hookrightarrow_h (G,X)$, if
    \begin{itemize}
        \item $H$ is finitely generated by a subset $Y$;
        \item $H$ is quasi-isometrically embedded: there exist constants $L,c$ such that, for all $h\in H$, $|h|_Y\le L|h|_{X}+c$;
        \item $H$ is \emph{geometrically separated}: for every $\varepsilon> 0$ there exists $R\ge 0 $ such that, for every $g\in G$, if $\diam_\Gamma(H\cap N_\varepsilon(gH))\ge R$, then $i=j$ and $g\in H$.
    \end{itemize}
\end{defn}

\noindent We next recall a natural class of hyperbolically embedded subgroups, induced by loxodromic elements in cobounded, acylindrical actions.
\begin{ex}\label{ex:loxoWPD_is_HH} Let $G$ be a non-virtually cyclic group, let $\Cay{G}{X}$ be hyperbolic and unbounded, and suppose that the action is acylindrical. Let $g\in G$ act loxodromically on $\Cay{G}{X}$, which exists by \cite[Theorem 1.1]{Osin_AH}, and let $E(g)$ be the maximal virtually cyclic subgroup of $G$ containing $g$, which exists by \cite[Lemma 6.5]{DGO}. Then $E(g)\hookrightarrow_h(G,X)$ by \cite[Theorem 6.8]{DGO}, whose proof essentially checks geometric separation. 
    
    For completeness, we point out that the above example can be widely generalised to non-cobounded, non-acylindrical actions as well, provided $g$ satisfies the \emph{WPD property} which, roughly, states that the action is acylindrical ``along the axis of $g$" \cite[Definition 6.1]{DGO}. 
\end{ex}

\begin{lem}\label{lem:qmorph_from_acylindrical}
Let $G$ be a non-virtually cyclic group, let $X$ be such that $\Cay{G}{X}$ is unbounded and hyperbolic, and suppose the $G$-action on $\Cay{G}{X}$ is acylindrical. There exists a non-trivial homogeneous quasimorphism $Q\colon G\to \R$ which is coarsely Lipschitz with respect to $X$, meaning that there exists $L\ge 0$ such that, for all $g,h\in G$, $|Q(g)-Q(h)|\le L|gh^{-1}|_X+L$.
\end{lem}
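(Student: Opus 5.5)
We will produce $Q$ by extending a homomorphism defined on a hyperbolically embedded virtually cyclic subgroup, and then homogenising. First, by \cite[Theorem 1.1]{Osin_AH} there is an element $g\in G$ acting loxodromically on $\Cay{G}{X}$. By \Cref{ex:loxoWPD_is_HH}, the maximal virtually cyclic subgroup satisfies $E(g)\hookrightarrow_h(G,X)$. Since $E(g)$ is virtually cyclic and $g$ has infinite order, $E(g)$ admits an unbounded quasimorphism $\chi\colon E(g)\to\R$. For instance, we can take the homogeneous quasimorphism $h\mapsto \lim_n \tau(h^n)/n$, where $\tau$ is defined using the index-$m$ normal cyclic subgroup $\langle g^m\rangle$, so that $\chi(g^k)=k$. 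Alternatively, \cite[Lemma 6.16]{DGO} lets us replace $g$ so that $E(g)$ surjects onto $\Z$, and we take $\chi$ to be that surjection.

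The key step is the Hull--Osin extension theorem \cite{Hull_Osin}. If $H\hookrightarrow_h (G,X)$, then every quasimorphism (indeed every quasicocycle) on $H$ extends to a quasimorphism $\tilde\chi$ on $G$. The extension is built Brooks-style: $\tilde\chi(w)$ is the sum of the $\chi$-values of the $H$-components of a geodesic from $1$ to $w$ in the relative Cayley graph. Applied to $\chi$, this gives a quasimorphism $\tilde\chi\colon G\to\R$ that is unbounded, since $\tilde\chi(g^k)\approx k$. Let $Q$ be its homogeneisation as in \Cref{defn:homogeneisation}. Then $Q$ is a homogeneous quasimorphism with $|Q-\tilde\chi|\le D$, and $Q(g)\neq 0$, so $Q$ is non-trivial.

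It remains to check the coarse Lipschitz bound. Since $Q$ is a quasimorphism of defect $D'$ and satisfies $Q(h^{-1})=-Q(h)$ by homogeneity, we get $|Q(a)-Q(b)|\le |Q(ab^{-1})|+D'$. Here $ab^{-1}$ and $b^{-1}a$ are conjugate, and $Q$ is conjugation-invariant, so it suffices to show $|Q(w)|\le L|w|_X+L$. Write $w=x_1\cdots x_n$ with $x_i\in X$ and $n=|w|_X$. The quasimorphism inequality gives $|Q(w)|\le \sum_i|Q(x_i)|+(n-1)D'$. So the whole bound reduces to showing that $\sup_{x\in X}|Q(x)|<\infty$.

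This is the main obstacle, because $X$ may be infinite. For it I would rely on the explicit form of the Hull--Osin extension. There, $|\tilde\chi(x)|$ for a single generator $x\in X$ is controlled by $\chi$ on $H$-components of length bounded in terms of $d_X(1,x)=1$. Geometric separation and the quasi-isometric embedding of $H=E(g)$ bound the $Y$-length of such components, and $\chi$ is Lipschitz on $(H,Y)$. This gives $\sup_{x\in X}|\tilde\chi(x)|<\infty$, hence the same bound for $Q$ up to $D$. If this bookkeeping in the Hull--Osin construction proves delicate, an alternative is to run the Bestvina--Fujiwara counting quasimorphism for the axis of $g$ in the hyperbolic graph $\Cay{G}{X}$ (WPD holds by acylindricity). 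That quasimorphism is visibly coarsely Lipschitz in the $X$-metric, because it counts copies of a fixed segment along geodesics whose length changes by at most one when a generator is appended, and it is non-trivial on a suitable loxodromic element by the standard independence argument. Homogenising preserves coarse Lipschitzness up to the additive defect.
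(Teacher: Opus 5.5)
Your main route is the paper's. You choose a loxodromic $t$ with $E(t)\hookrightarrow_h(G,X)$ and $E(t)\to\Z$ surjective, extend the surjection by Hull--Osin, homogenise, and use the quasimorphism inequality to reduce the Lipschitz bound to $\sup_{x\in X}|Q(x)|<\infty$. The paper handles the generator bound in the Frigerio--Pozzetti--Sisto formulation. The sum defining $Q_0(g)$ runs only over cosets $B$ with $\diam_B(\pi_B(1)\cup\pi_B(g))>2D$. An argument based on \cite[Lemma 2.8]{FPS} shows that no such coset exists when $g\in X$, so $Q_0\equiv 0$ on $X$. This is the precise form of your ``components of bounded length'' heuristic, which is sound: in $\Cay{G}{X\sqcup E(t)}$ a generator is at distance one, so any component is a single edge contributing a bounded amount.

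One step fails as you first state it. A virtually cyclic $E(g)$ need not admit an unbounded quasimorphism. Suppose $E(g)$ is of dihedral type, i.e.\ some $h\in E(g)$ satisfies $hg^nh^{-1}=g^{-n}$ (e.g.\ $E(g)\cong D_\infty$). Every homogeneous quasimorphism is conjugation-invariant and odd, so it vanishes on $g$. Since $\langle g\rangle$ has finite index, every quasimorphism on $E(g)$ is then bounded, and your $\tau$ cannot exist. So replacing $g$ via \cite[Lemma 6.16]{DGO}, so that $E(t)$ surjects onto $\Z$, is a necessary step, not an optional alternative; this is exactly what the paper does.

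Your Bestvina--Fujiwara fallback is a valid different route. Counting quasimorphisms are bounded by the displacement $\dist(1,x)=1$ on generators, so the same reduction applies. Non-triviality must confront the same inversion issue, which the Bestvina--Fujiwara theorem for non-elementary WPD actions handles.
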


\begin{proof}
    Let $t\in G$ be a loxodromic element for the action on $\Cay{G}{X}$ such that $E(t)$ surjects onto $\Z$, which exists by e.g. \cite[Lemma 6.16]{DGO}. The surjection $q\colon E(t)\to \Z$, seen as a homogeneous quasimorphism $E(t)\to \R$, can be extended to a homogeneous quasimorphism $Q\colon G\to \R$ by a result of Hull and Osin \cite[Theorem 1.4]{Hull_Osin}, later generalised by Frigerio, Pozzetti, and Sisto \cite[Theorem 1]{FPS} to higher dimensional quasicocycles. Our goal is to go through the latter construction to show that $Q$ is actually coarsely Lipschitz with respect to $X$. 

    We recall some notation from \cite{FPS}, see also \cite[Chapter 9]{BCSV} for a gentle walk-through.
    Endow $\Cay{G}{X}$ with the usual path metric where edges have length one, and let $(\hat G,\hat \dist)$ be the metric graph obtained from $\Cay{G}{X}$ by adding a vertex $c(B)$ for every coset $B$ and connecting every $x\in B$ to $c(B)$ via an edge of length $1/4$. Let $\dist_{E({t})}\colon E(t)\times E(t)\to [0,+\infty]$ be the \emph{relative metric} from \cite[Definition 4.25]{DGO}; for our purposes, it will suffice to know that $\dist_{E(t)}$ is $E(t)$-left-invariant and bounded from above by $\dist_{\Cay{G}{X}}$. For every $g\in G$ one then sets $\dist_{gE(t)}$ as the $g$-translate of $\dist_{E(t)}$; this is well-defined by left-invariance of $\dist_{E(t)}$. Define $\pi_B\colon G\to B$ by sending $x\in G$ to the set of all $y\in B$ realising the distance $\hat\dist(x,B)$. 
    \begin{claim}
        There exists $D\ge 1$ such that, if $g,h\in G$ satisfy $\dist_{\Cay{G}{X}}(g,h)\le 1$, then the diameter of $\pi_B(g)\cup \pi_B(h)$, with respect to $\dist_B$, is at most $D$. 
    \end{claim}
    \begin{claimproof} \cite[Lemma 2.8]{FPS} grants the existence of a constant $D\ge 1$ such that, if the diameter of $\pi_B(g)\cup \pi_B(h)$ is at least $D$, then every $\hat G$-geodesic between $g$ and $h$ passes through $c(B)$. If $g=h$ we are done by the Lemma; hence suppose $g$ and $h$ are $X$-adjacent, and therefore $\hat\dist(g,h)\le 1$. By the Lemma, if the diameter of their projections was greater than $D$, then the $X$-edge between them would not be a geodesic, so $\hat\dist(g,h)$ would have to be $1/2$; as a consequence, the unique geodesic between $g$ and $h$ would have to be the path $\{g,c(B),h\}$, meaning that $g,h$ would belong to $B$. However this is impossible, as then $g=\pi_B(g)$, $h=\pi_B(h)$, and $\dist_B(g,h)\le \dist_{\Cay{G}{X}}(g,h)=1$ as they are $X$-adjacent. 
    \end{claimproof}
    \noindent Now, the homogeneous quasimorphism $Q\colon G\to \R$ extending $q$ is the homogeneisation of the quasimorphism $Q_0$ sending each $g\in G$ to
    \[Q_0(g)\coloneq \sum\frac{1}{|\pi_B(1)\cup \pi_B(g)|}\sum_{x\in \pi_B(1),\,y\in\pi_B(g)}q(x^{-1}y), \]
    where the sum is over all $B\in G/E(t)$ such that $\diam_B(\pi_B(1)\cup \pi_B(g))>2D$. Notice that $Q_0$ is trivial on $X$ by the Claim; in turn, by \Cref{defn:homogeneisation}, we get $\sup_{x\in X}|Q(x)|\le \delta$, where $\delta$ is the defect of $Q_0$. Hence, for every $g,h\in G$, we have
    \begin{align*}
        |Q(g)-Q(h)|=|Q(g)+Q(h^{-1})|&\le 2\delta+ |Q(gh^{-1})|\\
        &\le 2\delta+|gh^{-1}|_X\left(2\delta+\sup_{x\in X}|Q(x)|\right)\\&\le 2\delta+3\delta|gh^{-1}|_X,
    \end{align*}
    where we repeatedly used that the the defect of $Q$ is $2\delta$ by \Cref{defn:homogeneisation}. If we set $L=3\delta$, the above inequality concludes the proof.
\end{proof}

\noindent We point out an immediate consequence of \Cref{lem:qmorph_from_acylindrical} that will be useful later:
\begin{cor}\label{cor:bounded_orbit_implies_trivial}
    In the setting of \Cref{lem:qmorph_from_acylindrical}, if $H\le G$ has a bounded orbit in $\Cay{G}{X}$, then $Q|_H\equiv 0$. 
\end{cor}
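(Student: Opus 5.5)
Since $H$ has a bounded orbit in $\Cay{G}{X}$, there is a constant $R\ge 0$ with $|h|_X=\dist_{\Cay{G}{X}}(1,h)\le R$ for every $h\in H$. The plan is to use this to show that $Q$ is bounded on $H$, and then to use homogeneity to conclude that $Q$ actually vanishes there.

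First, apply the coarse Lipschitz estimate from \Cref{lem:qmorph_from_acylindrical} with $g=h\in H$ and the identity element in place of $h$. Homogeneity gives $Q(1)=0$, so
\[|Q(h)|=|Q(h)-Q(1)|\le L|h|_X+L\le LR+L\]
for every $h\in H$. Hence $Q|_H$ is bounded by a constant depending only on $R$ and $L$.

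Second, fix $h\in H$. Every power $h^n$ also lies in $H$. Since $Q$ is homogeneous, $Q(h^n)=nQ(h)$ for all $n\in\mathbb{N}$. Combining this with the bound above yields
\[n|Q(h)|\le LR+L \quad\text{for all } n,\]
which forces $Q(h)=0$. As $h\in H$ was arbitrary, $Q|_H\equiv 0$.

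If the hypothesis is that some orbit $H\cdot x$ is bounded rather than the orbit of the identity, I would reduce to the case above. Since $\dist(h\cdot 1, 1)\le \dist(hx,x)+2\dist(x,1)$, the orbit of the identity is also bounded. Alternatively, one can pass to the conjugate $x^{-1}Hx$ and use that homogeneous quasimorphisms are conjugation invariant, as recalled in \Cref{defn:homogeneisation}.

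The argument is short. The only point needing care is this reduction to the orbit of the identity; everything else follows directly from the Lipschitz bound and homogeneity.
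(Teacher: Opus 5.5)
Your proof is correct and takes essentially the same route as the paper: bound $Q$ on $H$ using the coarse Lipschitz estimate of \Cref{lem:qmorph_from_acylindrical}, then use homogeneity ($Q(h^n)=nQ(h)$) to force $Q(h)=0$. The only difference is how an arbitrary bounded orbit $H\cdot g$ is handled: the paper passes to $g^{-1}Hg$ and uses conjugation invariance of $Q$, while your main reduction uses the triangle inequality for the isometric left action, which works equally well (and you mention the conjugation alternative yourself).
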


\begin{proof}
    Suppose that $\diam_{\Cay{G}{X}}(Qg)<T$ for some $g\in G$. By left-invariance of the Cayley metric, $\diam_{\Cay{G}{X}}(g^{-1}Qg)<T$, so for every $h\in Q$ we have that 
    \[Q(h)=Q(g^{-1}hg)\le LT+L.\]
    where we used that $Q$ is homogeneous and therefore invariant under conjugation. But then, again by homogeneity, $Q(h)=\lim_{n\to+\infty}\frac{Q(h^n)}{n}=0$, as required.
\end{proof}

\subsection{Hierarchical hyperbolicity}\label{sec:HHS}
This subsection gathers definitions and properties of hierarchically hyperbolic spaces and groups, first introduced in \cite{HHS_II}; see \cite[Definition~2.8]{russell} for this formulation of some of the axioms, which has by now become customary.

\begin{defn}[Hierarchically hyperbolic space]\label{defn:HHS}
    Let $E>0$, and let $Z $ be an $(E,E)$--quasigeodesic space.  A \emph{hierarchically hyperbolic space (HHS) structure with constant $E$} for $Z $ is an index set $\frakS$ and a set $\{ \C W \mid W\in\frakS\}$ of $E$-hyperbolic metric spaces $(\C W,\dist_W)$ such that the following axioms are satisfied. 
    \begin{enumerate}[label=(\arabic*{})]
        \item\label{axiom:projections} \textbf{(Projections.)} For each $W \in \frakS$, there exists a \emph{projection} $\pi_W \colon Z  \rightarrow 2^{\C W}$  that is a $(E,E)$--coarsely Lipschitz, $E$--coarsely onto, $E$--coarse map.
	
        \item\label{axiom:nesting} \textbf{(Nesting.)} If $\frakS \neq \emptyset$, then $\frakS$ is equipped with a  partial order $\nest$ and contains a unique $\nest$--maximal element. When $V\nest W$, we say $V$ is \emph{nested} in $W$. For each $W\in\frakS$, we denote by $\frakS_W$ the set of all $V\in\frakS$ with $V\nest W$.  Moreover, for all $V,W\in\frakS$ with $V\propnest W$ there is a specified non-empty subset $\rho^V_W\subseteq \C W$ with $\diam(\rho^V_W)\le E$.
            
        \item\label{axiom:finite_complexity}  \textbf{(Finite complexity.)} Any set of pairwise $\nest$--comparable elements has cardinality at most $E$.
            
        \item\label{axiom:orthogonal} \textbf{(Orthogonality.)} The set $\frakS$ has a symmetric relation called \emph{orthogonality}. If $V$ and $W$ are orthogonal, we write $V\perp W$ and require that $V$ and $W$ are not $\nest$--comparable. Further, whenever $V\nest W$ and $W\perp U$, we require that $V\perp U$. We denote by $\frakS_W^\perp$ the set of all $V\in \frakS$ with $V\perp W$.
            
        \item\label{axiom:containers}\textbf{(Containers.)} For each $W \in \frakS$ and $U \in \frakS_W$ with $ \frakS_W\cap \frakS_U^\perp \neq \emptyset$, there exists $Q \in\frakS_W-\{W\}$ such that $V \nest Q$ whenever $V \in\frakS_W \cap \frakS_U^\perp$.
		
        \item\label{axiom:transversality} \textbf{(Transversality.)} If $V,W\in\frakS$ are neither orthogonal nor $\nest$-comparable, we say $V$ and $W$ are \emph{transverse}, denoted $V\trans W$.  Moreover, for all $V,W \in \frakS$ with $V\trans W$, there are non-empty sets $\rho^V_W\subseteq \C W$ and $\rho^W_V\subseteq \C  V$, each of diameter at most $E$.
		
        \item\label{axiom:consistency} \textbf{(Consistency.)} For all $x \in Z $ and $V,W,U \in\frakS$:
	\begin{itemize}
            \item if $V\trans W$, then $\min\left\{\dist_{W}(\pi_W(x),\rho^V_W),\dist_{V}(\pi_V(x),\rho^W_V)\right\}\le E$, and
            \item if $U\nest V$ and either $V\propnest W$ or $V\trans W$ and $W\not\perp U$, then $\dist_W(\rho^U_W,\rho^V_W)\le E$.
	\end{itemize}
		
        \item\label{axiom:bounded_geodesic_image} \textbf{(Bounded geodesic image.)} For all $V,W\in\frakS$ and for all $x,y \in Z $, if  $V\propnest W$ and $\dist_V(\pi_V(x),\pi_V(y)) \geq E$, then every $\C W$--geodesic from $\pi_W(x)$ to $\pi_W(y)$ must intersect $\mathcal{N}_E(\rho_W^V)$.
		
        \item\label{axiom:partial_realisation} \textbf{(Partial realization.)} If $\{V_i\}$ is a finite collection of pairwise orthogonal elements of $\frakS$ and $p_i\in \C V_i$ for each $i$, then there exists $x\in Z $ so that:
	\begin{itemize}
		\item $\dist_{V_i}(\pi_{V_{i}}(x),p_i)\le E$ for all $i$;
            \item for each $i$ and each $W\in\frakS$, if $V_i\propnest W$ or $W\trans V_i$, we have $\dist_{W}(\pi_W(x),\rho^{V_i}_W)\le E$.
	\end{itemize}

        \item\label{axiom:uniqueness} \textbf{(Uniqueness.)} There is a function $\theta \colon [0,\infty) \to [0,\infty)$ so that for all $r \geq 0$, if $x,y\in Z $ and $\dist_Z (x,y)\geq\theta(r)$, then there exists $W\in\frakS$ such that $\dist_W(\pi_W(x),\pi_W(y))\geq r$. 

        \item\label{axiom:large_link_lemma} \textbf{(Large links.)} For all $W\in\frakS$ and  $x,y\in Z $, there exists $\{V_1,\dots,V_m\}\subseteq\frakS_W -\{W\}$ such that $m$ is at most $E \dist_{W}(\pi_W(x),\pi_W(y))+E$, and for all $U\in\frakS_W - \{W\}$, either $U\in\frakS_{V_i}$ for some $i$, or $\dist_{U}(\pi_U(x),\pi_U(y)) \le E$.  
    \end{enumerate}
    We use $\frakS$ to denote the relative HHS structure, including the index set $\frakS$, spaces $\{\C W : W \in \frakS\}$, projections $\{\pi_W : W \in \frakS\}$, and relations $\nest$, $\perp$, $\trans$.  We call an element $U\in \frakS$ a \emph{domain}, the associated space $\mathcal CU$ its \emph{coordinate space}, and call the maps $\rho_W^V$ the \emph{relative projections} from $V$ to $W$. We say a domain $U$ is \emph{unbounded} if $\mathcal C U$ is unbounded. The number $E$ is called the \emph{hierarchy constant} for $\frakS$; notice that every $E'\ge E$ is again a hierarchy constant for $\frakS$, so we are often free to enlarge $E$ by a bounded amount.  We often suppress reference to the projection maps, so for every $x,y\in Z$ and $U\in \frakS$ we write $\dist_U(x,y)$ to mean $\dist_U(\pi_U(x), \pi_U(y))$.
\end{defn}	

\noindent The large links axiom (\Cref{defn:HHS}.\ref{axiom:large_link_lemma}) can be replaced with the following, which is traditionally called \emph{passing up}. The equivalence of the two statements is \cite[Lemma~2.5]{HHS_II} for one direction, and \cite[Section~4.8]{Durham_Cubulating_Infinity} or \cite[Lemma~5.3]{PetytSpriano_Eyries} for the other.
\begin{enumerate}[label=(\arabic*{}'), start=11]
    \item\label{axiom:passing_up} \textbf{(Passing up).} For every $t>0$, there exists an integer $P=P(t)>0$ such that if $V\in \frakS$ and $x,y\in Z$ satisfy $\dist_{U_i}(x,y)>E$ for a collection of domains $\{U_i\}_{i=1}^P$ with $U_i\in \frakS_V$, then there exists $W\in \frakS_V$ with $U_i\propnest W$ for some $i$ such that $\dist_W(x,y)>t$. 
\end{enumerate}

\begin{defn}[Hierarchically hyperbolic group]\label{defn:HHG}
Let $G$ be a finitely generated group. We say $G$ is a \textit{hierarchically hyperbolic group} (HHG) if the following hold.
    \begin{enumerate}[label=(\roman*{})]
        \item\label{HHG_structure} There exists a HHS $(Z,\frakS)$ such that $G$ acts metrically properly and coboundedly on $Z$. 
        \item\label{HHG_action} There is a $\nest$--, $\perp$--, and $\trans$--preserving action of $G$ on $\frakS$ by bijections such that $\frakS$ contains finitely many $G$--orbits.
        \item\label{HHG_isometries} For each $W\in \frakS$ and $g\in G$, there exists an isometry $g_W\colon \C W \to \C (gW)$ satisfying the following for all $V,W\in\frakS$, $h\in G$, and $x\in Z$.
            \begin{itemize}
                \item The maps $(gh)_W\colon \C W \to \C (ghW)$ and $g_{hW}\circ h_W\colon \C W\to \C (ghW)$ coincide.
                \item $g_W(\pi_W(x))=\pi_{gW}(g\cdot x)$.
                \item If $V\trans W$ or $V\propnest W$, then $g_W(\rho^V_W)=\rho^{gV}_{gW}$.
            \end{itemize}
    \end{enumerate}
In \Cref{HHG_isometries} we often drop the subscript on the isometry $g_W$ and simply write $g$ when the domain $W$ is clear from context. 
\end{defn}
\begin{rem}\label{rem: fi_sgr_is_HHG}
Notice that, if $G$ is a HHG with underlying HHS $(Z,\frakS)$ and $H$ is a finite-index subgroup of $G$, then $H$ is itself a HHG, with the same underlying HHS.
\end{rem}

\begin{rem}[Bounded domain dichotomy]\label{rem:BDD}
    Since there are finitely many orbits of domains in a HHG, the latter satisfies the \emph{bounded domain dichotomy}: there exists a constant $C\ge 0$ such that, for every $U\in \frakS$, either $\diam\C U\le C$ or $U$ is unbounded. When working with a HHG, we therefore can and will assume that the hierarchy constant $E$ is greater than the diameter of any bounded domain.
\end{rem}

\subsubsection{Useful tools}
We gather here some spurious facts about HHSs and HHGs that we shall need later. 

\begin{lem}[{\cite[Lemma 2.1]{HHS_II}}]
    \label{lem:pairwise_orth_are_at_most_E} If $V_1,\ldots,V_k\in \frakS$ are pairwise orthogonal, then $k\le E$.
\end{lem}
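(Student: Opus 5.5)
The plan is to use partial realisation together with the uniqueness axiom to produce points whose projections to the $V_i$ are pairwise far apart, and then to invoke finite complexity. The catch is that finite complexity as stated only bounds chains of pairwise $\nest$-comparable domains, not collections of pairwise orthogonal ones. So a direct argument is needed. The standard route, as in \cite[Lemma 2.1]{HHS_II}, is induction via the containers axiom, and I would follow that.

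The first step is a reduction. Take pairwise orthogonal $V_1,\ldots,V_k$ and let $S$ be the $\nest$-maximal domain. If some $V_i$ equals $S$, then $k=1$, since $S$ cannot be orthogonal to anything nested in it. Otherwise every $V_i$ lies in $\frakS_S$. Now apply the containers axiom with $W=S$ and $U=V_1$: because $V_2\in\frakS_S\cap\frakS_{V_1}^\perp$, this set is nonempty, so there is $Q_1\propnest S$ with $V_2,\ldots,V_k\nest Q_1$.

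The second step is the iteration. Apply containers again, now inside $W=Q_1$ with $U=V_2$, provided $V_3$ exists. This yields $Q_2\propnest Q_1$ containing $V_3,\ldots,V_k$. Continuing in this way produces a strictly descending chain
\[S\sqsupsetneq Q_1\sqsupsetneq Q_2\sqsupsetneq\cdots\sqsupsetneq Q_{k-1}\sqsupseteq V_k.\]
Together with $S$ and $V_k$, this gives $k$ pairwise $\nest$-comparable elements, possibly $k+1$ if $Q_{k-1}\neq V_k$. Finite complexity therefore forces $k\le E$.

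The one point that needs care is the inductive use of containers at stage $j$. It requires $V_{j+1}\in\frakS_{Q_{j-1}}$ and $V_{j+1}\perp V_j$, with $V_j\in\frakS_{Q_{j-1}}$. Both hold: $V_j$ and $V_{j+1}$ were placed inside $Q_{j-1}$ at the previous stage, and orthogonality is given by hypothesis. The strictness $Q_j\propnest Q_{j-1}$ is supplied by the axiom itself, since $Q\in\frakS_W-\{W\}$.

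I expect no real obstacle, apart from making sure the chain is genuinely strict so that it has length $k$. That strictness is exactly what the "$-\{W\}$" clause in the containers axiom guarantees.
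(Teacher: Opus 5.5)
Your argument is correct and is the standard proof of \cite[Lemma 2.1]{HHS_II}, which the paper cites rather than reproduces: iterating the containers axiom gives a strict chain $Q_{k-1}\propnest\cdots\propnest Q_1\propnest S$ of $k$ pairwise $\nest$-comparable domains, so finite complexity gives $k\le E$. The opening idea via partial realisation and uniqueness is never used and can be dropped, and the fact that each $V_i$ is nested in $S$, which you use implicitly, follows from finite complexity together with uniqueness of the $\nest$-maximal element.
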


\begin{lem}[{\cite[Lemma 1.5]{DHS}}]\label{lem:DHS}
    Let $U,V,W\in\frakS$. If $U\orth V$, and both $\rho^U_W$ and $\rho^V_W$ are defined, then $\dist_{W}(\rho^U_W,\rho^V_W)\le 2E$.
\end{lem}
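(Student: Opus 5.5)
The plan is to show that the three relative projections $\rho^U_W$, $\rho^V_W$ and the image $\pi_W(x)$ of a suitable point $x$ are all close, using partial realisation on the orthogonal pair $\{U,V\}$. The bound will come from the second bullet of partial realisation (\Cref{defn:HHS}.\ref{axiom:partial_realisation}) applied to each of $U$ and $V$.

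Since $U\orth V$, the pair $\{U,V\}$ is a finite collection of pairwise orthogonal domains. Choose any points $p_U\in\C U$ and $p_V\in \C V$; partial realisation then produces $x\in Z$ with $\pi_U(x)$, $\pi_V(x)$ close to $p_U,p_V$. More importantly, for every $W$ such that $U\propnest W$ or $W\trans U$, it gives $\dist_W(\pi_W(x),\rho^U_W)\le E$, and likewise for $V$. The hypothesis that $\rho^U_W$ and $\rho^V_W$ are both defined means exactly that $U\propnest W$ or $U\trans W$, and similarly for $V$. These are precisely the cases covered by partial realisation. We need not worry about $W$ being nested in $U$, since then $\rho^U_W$ would not be a defined subset of $\C W$ in the sense used here; nor about $W$ orthogonal to $U$, for the same reason.

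Hence $\dist_W(\pi_W(x),\rho^U_W)\le E$ and $\dist_W(\pi_W(x),\rho^V_W)\le E$, and the triangle inequality gives $\dist_W(\rho^U_W,\rho^V_W)\le 2E$. Here $\dist_W$ between sets is understood as the infimal distance, or the distance between chosen points, as in the paper's conventions.

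The only delicate point is the bookkeeping on distances between sets. If distances are read as diameters of unions, one would need to add $\diam \rho$ and $\diam \pi_W(x)$, each at most $E$. This would give a constant like $4E$ or $5E$ rather than $2E$. The paper is free to enlarge $E$, so this does not matter, and with the standard convention of distance between sets the bound is exactly $2E$.
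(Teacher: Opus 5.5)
Your argument is correct and is the standard proof behind the cited \cite[Lemma 1.5]{DHS} (the paper gives no proof of its own): partial realisation for the orthogonal pair $\{U,V\}$ produces $x$ with $\pi_W(x)$ $E$-close to both $\rho^U_W$ and $\rho^V_W$. This applies because ``both defined'' means each of $U,V$ is properly nested in or transverse to $W$, and the triangle inequality then finishes.

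Your final paragraph, however, has the conventions reversed. With the usual HHS convention $\dist_W(A,B)=\diam(A\cup B)$, the triangle inequality through any single point of $\pi_W(x)$ gives exactly $2E$. Reading $\dist_W$ as infimal distance instead forces you to add $\diam \pi_W(x)\le E$, which only yields $3E$.
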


\begin{lem} \label{lem:acyl_action_on_CS} A HHG $(G,\frakS)$ acts acylindrically on the top-level coordinate space $\C S$. As a consequence, if $G$ is not virtually cyclic and $\C S$ is unbounded, then $G$ is acylindrically hyperbolic.
\end{lem}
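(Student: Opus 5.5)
The plan is to derive acylindricity from the action of $G$ on the HHS $Z$, and to import the key geometric input from the literature. The fact that a HHG acts acylindrically on $\C S$, for $S$ the $\nest$-maximal domain, is \cite[Theorem 14.3]{HHS_I}. Its proof rests on the distance formula and the bounded geodesic image axiom, so I only sketch the argument and cite for the details.

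Fix $R\ge 0$ and points $x,y\in Z$. Because the action of $G$ on $Z$ is cobounded and $\pi_S$ is coarsely onto, it suffices to bound, for points of $\C S$ of the form $\pi_S(x),\pi_S(y)$, the set of $g$ moving both by at most $R$. If $\dist_S(x,y)$ is large compared to $R$ and $E$, then any such $g$ coarsely fixes a long $\C S$-geodesic between them.

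The key step is to pass from $\C S$ to all the other domains. Take a hierarchy path in $Z$ from $x$ to $y$, and a point $z$ on it projecting near the midpoint of the $\C S$-geodesic. Consider any domain $U\propnest S$ with large $\dist_U(z,gz)$. By bounded geodesic image, $\rho^U_S$ lies near a geodesic from $\pi_S(z)$ to $\pi_S(gz)$, which is a short segment. A consistency/transversality argument, using that $g$ also coarsely fixes the endpoints, should then bound $\dist_U(z,gz)$ uniformly in terms of $R$ and $E$. Combined with the bound in $\C S$ itself, the uniqueness axiom (or the distance formula) gives $\dist_Z(z,gz)\le K(R)$. Metric properness of the action on $Z$, together with the finitely many orbits of points up to coboundedness, then bounds the number of such $g$ by some $N(R)$. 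This is the acylindricity condition.

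I expect the main obstacle to be bounding the projections to domains $U$ whose $\rho^U_S$ lies near the middle of the segment. There one has to combine that $g$ moves both far-apart endpoints little with the fact that $\pi_U$ of the hierarchy path is an unparametrised quasigeodesic. This is exactly the content of \cite{HHS_I}, and I would cite it rather than reprove it.

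The consequence is then formal. If $\C S$ is unbounded, the cobounded action of $G$ on $\C S$ (via $\pi_S$ coarsely onto and $G$ cobounded on $Z$) has unbounded orbits. $\C S$ is $E$-hyperbolic, and the action is acylindrical. If moreover $G$ is not virtually cyclic, then $G$ is acylindrically hyperbolic by definition.
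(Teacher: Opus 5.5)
Citing \cite[Theorem 14.3]{HHS_I} for acylindricity and then deducing the second sentence is exactly how the paper treats this lemma: it is stated without proof, as a standard fact. Your deduction of the consequence is also fine. Coboundedness on $Z$, together with coarse surjectivity and equivariance of $\pi_S$, makes $G$-orbits coarsely dense in $\C S$, hence unbounded, and the paper's definition of acylindrical hyperbolicity then applies verbatim.

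The sketch of the cited argument, however, is not merely incomplete: its central claim is false, so you should not present it as the mechanism behind the theorem. You assert that every $g$ with $\dist_S(x,gx),\dist_S(y,gy)\le R$ moves a point $z$ in the middle of a hierarchy path by at most $K(R)$ in $Z$. Here is a counterexample.

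Take $G=\Z^2*\Z=\langle a,b\rangle*\langle t\rangle$ with its relatively hyperbolic HHG structure, so that $\C S$ is the Cayley graph with the cosets of $\Z^2$ coned off. Let $g=a^Nt$, $x=1$ and $y=g^k$.
Then $\dist_S(x,gx)\le 3$ for every $N$. Since $g$ commutes with $y$, also $\dist_S(y,gy)=\dist_S(1,g)\le 3$. Meanwhile $\dist_S(x,y)\ge k$, because the coned-off graph maps $1$-Lipschitz to the Bass--Serre tree, where $g$ translates by $1$ along an axis through the base vertex.
But $a^Nt$ is cyclically reduced of length $N+1$, so $\dist_Z(z,gz)\ge N+1$ for \emph{every} $z\in G$, with $N$ arbitrary.

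Concretely, take $z=g^j$ and let $U$ be the $a$-line domain of the coset $g^j\Z^2$. Then $\rho^U_S$ is adjacent to $\pi_S(z)$ and $\dist_U(z,gz)=N$. No consistency or bounded geodesic image argument can bound this, because $g$ shifts the whole middle of the hierarchy path forward by one coset.

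So the uniform count cannot come from a displacement bound at a single point plus properness. One has to bound the number of possible \emph{positions} of $gz$, which may be far from $z$ in $Z$. For instance, one could show that $g$ coarsely preserves the middle of the path up to a bounded shift in $\C S$, and that only boundedly many suitably chosen points of it project into a bounded window of $\C S$.

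Either drop the sketch and rest the proof on the citation, or replace it with an argument of this kind.
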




\noindent We also list some mild properties that most natural HHSs enjoy. The first is a strengthening of Axiom \ref{axiom:containers}:
\begin{defn}[Clean containers]\label{defn:CC}
    A HHS $(Z,\frakS)$ has \emph{clean containers} if, for every $U\propnest V$ $\frakS_V\cap\frakS_U^\orth$ is either empty or it admits a unique maximal element $Q$, called the \emph{orthogonal complement} of $U$ in $V$. If $V=S$, we denote the orthogonal complement of $U$ in $S$ by $U^\orth$.
\end{defn}


\noindent For the other additional property, we need to recall the definition of product regions.

\begin{defn}[Product regions and gates]\label{defn:prodreg}
Let $\mathcal U=\{U_1,\ldots, U_k\}\subseteq \frakS$ be a collection of pairwise orthogonal domains. For a domain $W\in\frakS$, set $\mathcal U\propnest W$ (resp. $\mathcal U\trans W$) if there exists $U_i\in \mathcal U$ such that $U_i\propnest W$ (resp. $U_i\trans W$). Then define the \emph{product region} associated to $\mathcal U$ as the subspace
\[P_{\mathcal U}=\left\{x\in Z \mid \dist_W\left(x,\bigcup\rho^{U_i}_W\right)\le E\mbox{ whenever } \mathcal U\trans W \mbox{ or }\mathcal U\propnest W\right\},\]
where the union is over all $U_i$ for which $\rho^{U_i}_W$ is well-defined. When $\mathcal U$ consists of a single domain $U$, we denote the associated product region simply by $P_U$. 
\end{defn}

\begin{rem}[Gate map]
By combining \cite[Lemma 5.5 and Construction 5.10]{HHS_II}\footnote{A note for experts: for the simplified definitions of $P_U$ and $\gate_U$ we give here to be equivalent to the ones from \cite{HHS_I}, one should first assume $E$ is sufficiently large that the tuple $\{\pi_W(x)\}_{W\nest U\mbox{ or }W\orth U}\cup \{\rho^U_W\}_{W\trans U\mbox{ or } U\propnest W}$ is $E$-consistent for all $x\in Z$, and can be $E$-coarsely realised. Since we will always deal with HHGs, which have finitely many $G$-orbits of domains, we can once and for all replace $E$ by a large enough constant, and forget about these technicalities.} there exists a coarsely Lipschitz map $\gate_U\colon Z\to P_U$, which maps a point $x\in Z$ to the coarsely unique point $\gate_U(x)\in P_U$ such that:
\begin{itemize}
    \item For all $W\nest U$ or $W\orth U$, $\dist_{W}(x,\gate_U(x))\le E$;
    \item For all $U\propnest W$ or $W\trans U$, $\dist_{W}(\rho^U_W,\gate_U(x))\le E$.
\end{itemize}
\end{rem}


\noindent The following very mild requirement identifies a product region in a HHG and the associated domain stabiliser:
\begin{defn}[Cobounded product regions]\label{defn:cobounded_pr}
    A HHG has \emph{cobounded product regions} if, for every $U\in\frakS$, $\Stab{G}{U}$ acts coboundedly on $P_U$. In particular, since the whole $G$ acts properly on $Z$, $\Stab{G}{U}$ is quasi-isometric to $P_U$; the gate onto $P_U$ then yields a $\Stab{G}{U}$-equivariant, coarsely Lipschitz coarse retraction $Z\to \Stab{G}{U}$, which we still call $\gate_U$ with a little abuse of notation.
\end{defn}

\subsection{Coarse median structures}\label{sec:CM}
We conclude the background section by recalling the notion of a coarse median structure, and how it arises from a hierarchically hyperbolic structure.

\begin{defn}[Coarse median structure]\label{defn:coarse_median}
A \emph{coarse median space} is a metric space $(Z,\dist)$ equipped with a symmetric ternary operation $\mu\colon Z^3\to Z$, called a \emph{coarse median}, satisfying the following, for some constant $R\ge 0$:
\begin{itemize}
  \item \emph{Localisation}: $\mu(x,x,y)=a$ for all $x,y\in  Z $;
 \item \emph{Coarsely Lipschitz}: for all $x,x',y,z\in  Z $, we have
      $$d(\mu(x,y,z), \mu(x',y,z)) \leqslant Rd(x,x')+R;$$
  \item \emph{Coarse associativity}: for all $x,y,z,p\in  Z $, we have
\[
\dist\left(\mu\left(\mu(x,p,y),p,z\right), \mu\left(x,p,\mu(y,p,z)\right)\right)\le R.
\]
 \end{itemize}
 Two coarse median $\mu_1,\mu_2\colon  Z ^3\to  Z $ are \emph{equivalent}, and we write $\mu_1\sim \mu_2$, if \[\sup_{x,y,z\in  Z } \dist_{ Z }(\mu_1(x,y,z),\mu_2(x,y,z))<+\infty;\] an equivalence class of coarse medians is called a \emph{coarse median structure}. 
 
 Let $G$ be a finitely generated group, and let $Z$ be a Cayley graph for $G$ with respect to a finite generating set. A coarse median structure on $Z$ is \emph{$G$-equivariant} if, given any representative $\mu$ in the equivalence class, $\mu \sim g\mu(g^{-1}(\cdot),g^{-1}(\cdot),g^{-1}(\cdot)) $; a group equipped with an equivariant coarse median structure is called a \emph{coarse median group}.
\end{defn}

\begin{ex}[Hyperbolic spaces admit coarse median structures]
    A hyperbolic space $Y$ admits a coarse median structure: for every $a,b,c\in Y$, their coarse median is the coarse centre of any geodesic triangle with these vertices (see e.g. \cite[Proposition III.$\Gamma$.1.17]{bridsonhaefliger}). This coarse median structure is also unique by \cite[Theorem 4.2]{NWZ}.
\end{ex}

\begin{ex}[HHSs admit coarse median structures]\label{ex:coarse_median_from_HHS}
By e.g. \cite[Theorem 7.3]{HHS_II}, which in turn builds on an observation of Bowditch \cite{bowditch_mcg}, a HHS $(Z,\frakS)$ admits a coarse median structure $\mu$, obtained as follows: for every $x,y,z\in Z$ and every $U\in\frakS$, the $U$-coordinate of $\mu(x,y,z)$ coarsely agrees with the coarse median of $\pi_U(x),\pi_U(y),\pi_U(z)$ in the hyperbolic space $\C U$. In other words, the coarse median associated to $(Z,\frakS)$ makes all coordinate projections $\pi_U\colon Z\to \C U$ coarse median preserving, and is characterised by this property by the uniqueness axiom \Cref{defn:HHS}.\eqref{axiom:uniqueness}; as we will prove in \Cref{thm:uncountably_many_CM}, different HHS structures can produce different coarse median structures. Notice also that, if $(Z,\frakS)$ is a HHG structure for a group $G$, then $\mu$ is $G$-equivariant, by how $G$ acts on coordinate spaces.
\end{ex}

\noindent A subspace which is "coarsely" closed under the coarse median inherits a coarse median from the ambient space. 

\begin{defn}
    Let $(Z,\dist, \mu)$ be a coarse median space. A subset $Y\subseteq Z$ is \emph{median-quasiconvex}, or quasiconvex with respect to $\mu$, if there exists a constant $K\ge 0$ such that, for every $y,y'\in Y$ and $z\in Z$, $\mu(y,y',z)\in N_R(Y)$. If $(G,\mu)$ is a coarse median group and $Z$ is a Cayley graph for $G$ with respect to a finite generating set, we say that a finitely generated subgroup $H$ is median-quasiconvex if the orbit $H\cdot g\subseteq Z$ is median-quasiconvex, for some choice of $g\in G$ (hence every choice, by e.g. \cite[Lemma 2.3(6)]{FS_pantsgraph}).
\end{defn}

\begin{lem}
    A median-quasiconvex subset inherits a well-defined coarse median structure. In the case of a median-quasiconvex subgroup, the inherited coarse median structure is equivariant.
\end{lem}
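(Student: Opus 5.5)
The natural approach is to transport the coarse median from $Z$ to $Y$ by composing with a closest-point retraction. Let $K$ be the median-quasiconvexity constant of $Y$. For each $z\in N_K(Y)$ choose a point $r(z)\in Y$ with $\dist(z,r(z))\le K$, and define
\[\mu_Y(y,y',y'')\coloneq r\bigl(\mu(y,y',y'')\bigr)\]
for $y,y',y''\in Y$. We may take $\mu$ symmetric and satisfying localisation exactly. Then $\mu_Y$ is symmetric, and it satisfies localisation after we redefine $\mu_Y(y,y,y')=y$; this changes $\mu_Y$ by at most $K$ (since $\mu(y,y,y')=y$ already), so nothing is lost.

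Each axiom is checked up to an additive error.

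\begin{itemize}
\item \emph{Coarse Lipschitz.} We have $\dist(\mu_Y(x,y,z),\mu_Y(x',y,z))\le R\dist(x,x')+R+2K$, where $Y$ carries the restricted metric.
\item \emph{Coarse associativity.} Set $m=\mu(x,p,y)$, so $\mu_Y(x,p,y)=r(m)$ is within $K$ of $m$. By the Lipschitz property of $\mu$, $\mu(r(m),p,z)$ is within $RK+R$ of $\mu(m,p,z)$, and applying $r$ costs a further $K$. Treating the other side symmetrically and invoking associativity of $\mu$ gives a uniform bound of the form $2(RK+R+2K)+R$.
\end{itemize}

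A different choice of retraction $r$, or of representative $\mu$, changes $\mu_Y$ by a bounded amount. The same holds for an equivalent $\mu'$: equivalent coarse medians differ by some bounded $C$, so $\mu'$-quasiconvexity holds with constant $K+C$ and the two induced medians differ by at most $2K+2C$. So the induced coarse median structure is well defined. If $Y$ should carry an intrinsic metric (for a subgroup, its own word metric), we first need that it is coarsely equivalent to the restricted one.

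This is the step I expect to be the main obstacle: median-quasiconvexity does not in general give undistortion. For coarse median groups it does: a finitely generated median-quasiconvex subgroup is undistorted, because median-quasiconvex subsets of coarse median spaces are quasi-isometrically embedded. I would cite this (as in \cite{FS_pantsgraph}) rather than reprove it. The Lipschitz constant then only changes by the quasi-isometry constants.

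For equivariance in the subgroup case, take $Y=H\cdot g$; up to translating by $g^{-1}$, which commutes with the median up to bounded error, we may take $Y=H$. For $h\in H$, $G$-equivariance of $\mu$ gives $h\mu(h^{-1}\cdot)\sim\mu$ with error independent of $h$. Since $r$ moves points by at most $K$, we get $h\,\mu_Y(h^{-1}a,h^{-1}b,h^{-1}c)$ within $2K$ plus that error of $\mu_Y(a,b,c)$, uniformly in $h$. So the induced structure on $H$ is $H$-equivariant.
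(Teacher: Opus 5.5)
Your proposal is correct and is essentially the paper's proof: pick a point of $Y$ within $K$ of $\mu(y_1,y_2,y_3)$, then inherit symmetry, localisation, the coarse Lipschitz property, coarse associativity and (for subgroups) equivariance from $\mu$ at the cost of additive constants. Your attention to intrinsic versus restricted metric goes beyond the paper, which works with $(Y,\dist)$ throughout. The undistortion you plan to cite is in fact immediate: applying $\mu(a,b,\cdot)$ to a geodesic from $a$ to $b$ gives a $2R$-coarse path inside $N_K(Y)$ with the same number of steps. As a small correction, you pass from $Hg$ to $H$ by a bounded Hausdorff move, not by left translation by $g^{-1}$, which gives $g^{-1}Hg$.
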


\begin{proof}
    Let $Y\subseteq Z$ be $K$-quasiconvex with respect to $\mu$. For all $y_1,y_2,y_3\in Y$, define $\mu_Y(y_1,y_2,y_3)$ as any point in $Y$ within distance $K$ from $\mu(y_1,y_2,y_3)$. We can assume that $\mu_Y\coloneq Y^3\to Y$ is symmetric, and that if $y_1=y_2$ then $\mu_Y(y_1,y_1,y_3)=y_1=\mu(y_1,y_1,y_3)$; in the case where $Y$ is a subgroup of a coarse median group, one can also choose $\mu_Y$ to be coarsely equivariant with respect to the $Y$-action. Any two possible choices for $\mu_Y(y_1,y_2,y_3)$ are at distance at most $2K$, so $\mu_Y$ is coarsely well-defined. Moreover, $\mu_Y$ is a coarse median on $(Y,\dist)$. Indeed, we chose $\mu_Y$ to satisfy symmetry and localisation; the coarsely Lipschitz property was already satisfied by $\mu$; and coarse associativity is a combination of the coarsely Lipschitz property and coarse associativity for $\mu$. 
\end{proof}

\section{Uncountably many coarse median structures}\label{sec:main_proof}
In this Section we prove that numerous HHGs admit a continuum of coarse median structures, \Cref{thm:uncountably_many_CM}. We shall work under the following assumptions.
\begin{assumption}\label{notation:tweakable_hhg}
    Let $G$ be a HHG, with underlying HHS $(Z,\frakS)$. We assume the following facts hold:
    \begin{enumerate}
        \item \label{need:prodregions} $(Z,\frakS)$ has clean containers (\Cref{defn:CC}) and cobounded product regions (\Cref{defn:cobounded_pr}).
        \item \label{need:annular_curve_graph} There exists a minimal domain $\ell\in \frakS$ whose coordinate space $\C \ell$ is a quasiline.
        \item \label{need:orth_compl} The orthogonal complement $\ell^\orth$ is unbounded, it is nested in no unbounded domain possibly except $S$, and $(\ell^\orth)^\orth=\ell$. Furthermore, for any $W\in \frakS$, if $\ell\trans W$ then $\ell^\orth\trans W$. 
        \item \label{need:dehntwist} There exists $T\in \Stab{G}{\ell}$ such that $\langle T\rangle$ is normal in $\Stab{G}{\ell}$, it acts loxodromically on $\C\ell$ and trivially on $\frakS_\ell^\orth$, and for every $W\in \frakS_\ell^\orth$ the orbits of $\langle T\rangle$ in $\C W$ are uniformly bounded. 
        \item \label{need:acylindricity} $H\coloneq \Stab{G}{\ell}/\langle T\rangle $ is not virtually cyclic, and the induced $H$-action on $\C \ell^\orth/\langle T\rangle$ is acylindrical.
    \end{enumerate}
\end{assumption}



\noindent The technical heart of the proof of  \Cref{thm:uncountably_many_CM} will be to show that the coordinate space for $\ell$ can be freely replaced by another quasiline, chosen from the following family.

\begin{construction}\label{construction:newline} Firstly, by \Cref{lem:MilnorSchwarz}, we once and for all replace $Z$ by a Cayley graph for $G$, with respect to some finite generating set. Next, since $(G,\frakS)$ has cobounded product regions, $\Stab{G}{\ell}$ acts coboundedly on $\C \ell^\orth$, so $H$ acts coboundedly on $\C \ell^\orth/\langle T\rangle$. By the Milnor-Schwarz \Cref{lem:MilnorSchwarz}, there exists a generating set $X_0$ for $H$ such that $\Cay{H}{X_0}$ is equivariantly quasi-isometric to $\C \ell^\orth/\langle T\rangle$. Then the set $X\coloneq X_0\cap \{T^k\}_{k\neq 0}$ generates $\Stab{G}{\ell}$ and $\Cay{\Stab{G}{\ell}}{X}$ is equivariantly quasi-isometric to $\C \ell^\orth/\langle T\rangle$, and therefore to $\C \ell^\orth$ since $\langle T\rangle$ acts on it with uniformly bounded orbits. We therefore once and for all replace every $G$-translate of $\C \ell^\orth$ by the corresponding translate of $\Cay{\Stab{G}{\ell}}{X}$ in the HHG structure.
    
    By \Cref{lem:qmorph_from_acylindrical}, there exists a non-trivial homogeneous quasimorphism $Q\colon H\to \R$ which is $X_0$-coarsely Lipschitz; in turn, if we denote by $p_\ell$ the projection $\Stab{G}{\ell}\to H$, the composition $\psi\coloneq \Stab{G}{\ell}\xrightarrow{p_\ell} H \xrightarrow{Q} \R $ is a non-trivial homogeneous quasimorphism which is $X$-coarsely-Lipschitz. 
    
    Now let $\sigma\colon \Stab{G}{\ell}\to \Z/2\Z$ be the action on the ideal endpoints of $\C \ell$, with kernel $K$. Let $\phi\colon \Stab{G}{\ell}\to \R$ be the Busemann quasicocycle for the action on $\C \ell$, as in \Cref{ex:busemann}; in the case where $K\lneq \Stab{G}{\ell}$, the latter depends on the choice of some element $i\in \Stab{G}{\ell}-K$. By \Cref{cor:quasiline_from_Busemann_qc}, up to equivariantly replacing $\C \ell$ and its translates in $(G,\frakS)$, we can assume that it coincides with the quasiline produced by \Cref{lem:quasiline_from_qc}, and therefore that $\phi\colon \C\ell\to \R$ is a quasi-isometry.
    
    For every $\lambda\in \R$ consider the linear combination
    \[\theta_\lambda\coloneq\phi+\lambda\psi\colon \Stab{G}{\ell}\to \R\]
    which is again a $\sigma$-quasicocycle; notice that $\theta_\lambda(T)=\phi(T)\neq 0$, so $\theta_\lambda$ is unbounded. \Cref{lem:quasiline_from_qc} then produces a generating set $S_\lambda$ for $\Stab{G}{\ell}$ such that $L_\lambda\coloneq \Cay{\Stab{G}{\ell}}{S_\lambda}$ is a quasiline and $\theta_\lambda\colon L_\lambda\to \R$ is a quasi-isometry. 
\end{construction}

\noindent The following is the key technical step of \Cref{thm:uncountably_many_CM}:
\begin{prop}\label{prop:new_hhg_structures}
     In the setting of \Cref{notation:tweakable_hhg} and \Cref{construction:newline}, for any $\lambda\in \R$ there exists a HHG structure for $(G,\frakS_\lambda)$ such that:
     \begin{itemize}
         \item The underlying metric space is $Z$, and the domain set is $\frakS_\lambda=\frakS$, with the same relations;
         \item For every $W\not\in G\cdot \ell$, $\C W$ is unchanged;
         \item For every $g\in G$, $\C{(g\ell)}=g L_\lambda$.
     \end{itemize}
     Furthermore, $\langle T\rangle$ acts loxodromically on $L_\lambda$ and with uniformly bounded orbits on $\C W$ whenever $W\in \frakS_\lambda$ is orthogonal to $\ell$. 
\end{prop}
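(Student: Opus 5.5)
Since $\frakS_\lambda=\frakS$ with the same relations and only the coordinate spaces of the orbit $G\cdot\ell$ change, I would verify the axioms of \Cref{defn:HHS} and \Cref{defn:HHG} by keeping all old data away from $G\cdot\ell$ and defining new data near it. Fix coset representatives $g_j$ for $G/\Stab{G}{\ell}$. Set $\C(g_j\ell)\coloneq g_jL_\lambda$ and define $\pi_{g_j\ell}\colon Z\to g_jL_\lambda$ by $x\mapsto g_j\gate_\ell(g_j^{-1}x)$. Here $\gate_\ell\colon Z\to\Stab{G}{\ell}$ is the coarse retraction from \Cref{defn:cobounded_pr}. Since $\ell$ is minimal, only relative projections $\rho^\ell_W$ (for $\ell\propnest W$ or $\ell\trans W$) stay as before. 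The ones to change are $\rho^W_\ell$ for $W\trans\ell$. For these I would take $\rho^W_\ell\coloneq\pi_\ell(P_W)$, or equivalently $\pi_\ell(\gate_W(P_\ell))$, and show this has bounded diameter in $L_\lambda$. Equivariance and the compatibility $(gh)_W=g_{hW}h_W$ then hold by construction, because $\Stab{G}{\ell}$ acts on $L_\lambda$ by left multiplication, which commutes with $\gate_\ell$ up to bounded error. Hyperbolicity is clear since $L_\lambda$ is a quasiline.

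The key estimate is that $\pi_\ell$ is coarsely Lipschitz with respect to $\theta_\lambda=\phi+\lambda\psi$. Both pieces are Lipschitz on $\Stab{G}{\ell}$ with the word metric: $\phi$ is a quasi-isometry to the old $\C\ell$, which is a coarsely Lipschitz image of $P_\ell$, and $\psi$ is $X$-coarsely Lipschitz by \Cref{lem:qmorph_from_acylindrical}. So $\theta_\lambda\circ\gate_\ell$ is coarsely Lipschitz, which gives the projection axiom. The ``furthermore'' part follows from \Cref{lem:quasiline_from_qc}. $T$ is loxodromic because $\theta_\lambda(T^n)=n\phi(T)$, using $\psi(T)=0$ as $p_\ell(T)=1$. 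The bounded orbits on $\C W$ for $W\orth\ell$ are inherited from \Cref{notation:tweakable_hhg}.\eqref{need:dehntwist}.

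Next come the axioms involving $\ell$.

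\textbf{Consistency for $W\trans\ell$.} By \Cref{notation:tweakable_hhg}.\eqref{need:orth_compl}, $W\trans\ell^\orth$ as well. I would show that if $\dist_W(x,\rho^\ell_W)>E'$, then $\gate_\ell(x)$ lies near a fixed subset of $P_\ell$ of bounded $\theta_\lambda$-diameter. Its $\C\ell$-coordinate is near $\rho^W_\ell$ by old consistency. Its $\C\ell^\orth$-coordinate is near $\rho^W_{\ell^\orth}$ by consistency for the pair $W,\ell^\orth$. Then $\phi$ is bounded, and $\psi$ is bounded because it is Lipschitz in the $\C\ell^\orth$-coordinate, having been defined via the $H$-action on $\C\ell^\orth/\langle T\rangle$. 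Since $P_\ell\simeq\C\ell\times\C\ell^\orth$ via clean containers, both coordinates are controlled.

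\textbf{Bounded geodesic image.} This is vacuous, since $\ell$ is minimal and nothing is properly nested in it.

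\textbf{Large links.} Only $W=S$ matters, and the old large links statement still applies there, because the set of domains with large projection can only differ at translates of $\ell$. Passing up \ref{axiom:passing_up} is the cleaner check. Many $g\ell$ with large $\theta_\lambda$-projection must each have a large projection either in old $\C g\ell$ or in $\C g\ell^\orth$. In the first case old passing up applies. In the second, nesting $g\ell\propnest$ some domain containing $g\ell^\orth$ (namely $S$) together with the old structure gives large projection at $S$, by bounded geodesic image type arguments for $g\ell^\orth\propnest S$.

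\textbf{Partial realisation.} Choose a point of $P_\ell$ with prescribed $\theta_\lambda$-value. This is possible since $T$ moves $\theta_\lambda$ coarsely surjectively while fixing the other coordinates up to bounded error.

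\textbf{Uniqueness.} If $\dist_Z(x,y)$ is large, old uniqueness gives some domain with large projection. If that domain is a translate $g\ell$, then either $\theta_\lambda$ is large or $\psi$ is large. A large $\psi$ means the $g\ell^\orth$-coordinate is large, since $\psi$ is Lipschitz in it.

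I expect the main obstacle to be the consistency and passing-up steps, which require converting ``$\psi$ is large'' into ``$\C\ell^\orth$ is large''. This is exactly why \Cref{notation:tweakable_hhg}.\eqref{need:orth_compl} is imposed: $\ell^\orth$ is transverse to everything transverse to $\ell$, and it is only nested in $S$.
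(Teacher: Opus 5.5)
Your overall architecture is the paper's: gate-map projections to $gL_\lambda$, $\rho^W_{\ell_\lambda}=\pi_{\ell_\lambda}(P_W)$, control of $L_\lambda$-distances by the old $\C\ell$- and $\C\ell^\orth$-distances (\Cref{claim:what_determines_pilambda}), powers of $T$ for partial realisation, and the same uniqueness argument. But two axiom checks are wrong as stated.

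\textbf{Bounded geodesic image is \emph{not} vacuous.} The case $W=\ell$ is empty, but the case $V=\ell\propnest W$ is not. There $\C W$ and $\rho^\ell_W$ are unchanged, but the hypothesis $\dist_{L_\lambda}(x,y)\ge E$ is new. It can hold with $\dist_\ell(x,y)$ small when $\psi$ varies, i.e.\ when $\dist_{\ell^\orth}(x,y)$ is large. You need the paper's argument. $W$ is not nested in $\ell^\orth$ (else $\ell\nest\ell^\orth$), nor orthogonal to it (else $W\nest(\ell^\orth)^\orth=\ell$), so $\ell^\orth\propnest W$ or $\ell^\orth\trans W$. A $\C W$-geodesic avoiding $N_{4E}(\rho^\ell_W)$ avoids $N_E(\rho^{\ell^\orth}_W)$ by \Cref{lem:DHS}. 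Then old bounded geodesic image (resp.\ consistency) bounds $\dist_{\ell^\orth}(x,y)$, old bounded geodesic image bounds $\dist_\ell(x,y)$, and hence $\dist_{L_\lambda}(x,y)$ is bounded. This is where $(\ell^\orth)^\orth=\ell$ enters.

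\textbf{Your passing-up sketch only works for $V=S$.} Suppose many $g_i\ell\nest V$ have large $L_\lambda$-projection because $\dist_{g_i\ell^\orth}(x,y)$ is large. Old passing up applied to the $g_i\ell^\orth$ can only return $S$ as a witness, since every other domain containing $g_i\ell^\orth$ is bounded. But $S\notin\frakS_V$ when $V\neq S$ (think of a proper subsurface containing many non-separating curves). For the same reason, your claim that in large links ``only $W=S$ matters'' is unjustified.

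The missing idea is to bound the number of such $g_i\ell$ outright. Since $g_i\ell\nest V$, consistency and \Cref{lem:DHS} put every $\rho^{g_i\ell^\orth}_S$ within a few $E$ of $\rho^V_S$. Cut the $\C S$-geodesic from $\pi_S(x)$ to $\pi_S(y)$ at its first and last points near $\rho^V_S$. This gives $x',y'$ with $\dist_S(x',y')\le 21E$, while bounded geodesic image keeps $\dist_{g_i\ell^\orth}(x',y')>E$. Old passing up with $t=21E$ then caps their number at $P(21E)$.

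\textbf{Smaller omissions.}
\begin{itemize}
\item Bounded $\langle T\rangle$-orbits on $\C W$ for $W\orth\ell$ are not inherited when $W=g\ell$, because that space was replaced by $gL_\lambda$. You need $\theta_\lambda(g^{-1}Tg)=0$ and then \Cref{lem:quasiline_from_qc}. Here $g^{-1}Tg$ is elliptic on the old $\C\ell$, so $\phi$ vanishes on it. It also preserves the bounded set $\rho^{g^{-1}\ell}_{\ell^\orth}$, so $\psi$ vanishes on it by \Cref{cor:bounded_orbit_implies_trivial}. Your partial-realisation step relies on exactly this whenever the orthogonal family contains several translates of $\ell$.
\item Your consistency argument only treats $W\notin G\cdot\ell$. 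The case $\ell\trans g\ell$, where both coordinate spaces are new, is not addressed.
\item The second consistency bullet with target $\ell$, comparing $\rho^U_{\ell_\lambda}$ and $\rho^V_{\ell_\lambda}$ for $U\nest V$, is also not addressed. Both of these follow from the same two-coordinate control.
\end{itemize}
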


\begin{proof}
We shall prove one axiom from \Cref{defn:HHS} and \Cref{defn:HHG} at a time, introducing all relevant data as soon as we shall need them. Each axiom will be proven with some choice of the HHS constant, and we shall eventually set the HHS constant for $(G,\frakS_\lambda)$ as the maximum of these values. To avoid confusion, when we want to distinguish between the original and the new projections, we denote the copy of $\ell$ for the structure depending on $\lambda$ by $\ell_\lambda$: for instance, $\pi_{\ell_\lambda}$ will denote the new projection to $L_\lambda$, while $\pi_\ell$ will denote the original projection to $\C \ell$. For two subsets $A,B$ of a metric space, we will use the notation $A\sim B$ to mean that the Hausdorff distance between $A$ and $B$ is bounded by a uniform constant, possibly depending on the original structure $(Z,\frakS)$ and on $\lambda$.

\medskip\textbf{(Projections.)} If $W\not\in G\cdot \ell$, let $\pi_W\colon Z\to \C W$ be the original projection. Next, fix an identification of $\Stab{G}{\ell}$ with $P_\ell$ given by cobounded product regions, so that for all $g\in G$ the coset $g\Stab{G}{\ell}$ is identified with the subspace $gP_\ell=P_{g\ell}$; then set $\pi_{g\ell_\lambda}\colon Z\to gL_\lambda$ as the composition of the gate map $\gate_{g\ell}\colon Z\to g\Stab{G}{\ell}$ and the identity map $g\Stab{G}{L}\to gL_\lambda=g\Cay{\Stab{G}{\ell}}{S_\lambda}$. We stress that $\gate_{g\ell}$ is defined using the original HHG structure, and as such does not depend on $\lambda$. The projection $\pi_{\ell_\lambda}$ does not depend on the choice of the coset representative $g$, since both $\gate_\ell$ and the projection to $L_\lambda$ are $\Stab{G}{\ell}$-equivariant; furthermore, it is a coarsely Lipschitz coarse map as so are its factors. Finally, by construction $g\pi_{\ell_\lambda}(x)=\pi_{g\ell_\lambda}(gx)$ for all $x\in Z$ and $g\in G$, so the new projections are uniformly coarsely Lipschitz coarse maps. 
        \\
        For later purposes, we point out that $\pi_{\ell_\lambda}$ only depends on the original projections on $\C \ell$ and $\C \ell^\orth$:
        \begin{claim}\label{claim:what_determines_pilambda}
            For every $A\subseteq G$, $\diam \pi_{\ell_\lambda}(A)$ is bounded in terms of $\max\{\diam \pi_{\ell}(A), \diam \pi_{\ell^\orth}(A)\}$.
        \end{claim}
        \begin{claimproof}[Proof of \Cref{claim:what_determines_pilambda}] There is nothing to prove if either $\pi_{\ell}(A)$ or $\pi_{\ell^\orth}(A)$ are unbounded, so suppose otherwise. Taking the gate to $P_{\ell}$ does not change the coordinates in $\C \ell$ and $\C \ell^\orth$, so $B\coloneq \gate_\ell(A)$ has bounded projections to $\C \ell$ and $\C \ell^\orth$. Now, by construction $\pi_{\ell_\lambda}(A)$ is the projection of $B$ to $L_\lambda$; since $\theta_\lambda=\phi+\lambda\psi\colon L_\lambda\to \R$ is a quasi-isometry, it is enough to show that both $\phi$ and $\psi$ are bounded on $B$, when we identify the latter with a subset of $\Stab{G}{\ell}$.  Now we analyse one quasicocycle at a time:
            \begin{itemize}
                \item Recall that $\phi$ is the Busemann quasicocycle associated to the action on $\C\ell$, as in \Cref{ex:busemann}, which depends on the choice of $i\in \Stab{G}{\ell}$ and has the property that $\phi(ki)=\phi(k)$ for all $k\in K$. Now let $x\in  B$, and let $x'$ be either $x$ if $x\in K$, or $xi^{-1}$ if $x\not\in K$, so that $\phi(x')=\phi(x)$ while  $\pi_{\ell}(x')\sim \pi_\ell(x)$ belongs to a bounded subset. If we fix $x_0\in B$, for every other $x$ the difference $x'(x_0')^{-1}$ preserves a bounded subset, and therefore acts with uniformly bounded orbits on $\C\ell$ as it preserves orientation. Hence $\phi (x'(x_0')^{-1})=0$ for all $x\in  B$, meaning that $\phi(B)\sim \phi(x_0')$ is bounded.
                \item Furthermore, $\C\ell^\orth =\Cay{\Stab{G}{\ell}}{X}$ and $\psi$ is $X$-coarsely-Lipschitz, hence $\psi(B)$ is bounded as so is $\pi_{\ell^\orth}(B)$.\qedhere
            \end{itemize}
        \end{claimproof}

        \medskip\textbf{(Nesting, finite complexity, orthogonality and containers.)} The relations on $\frakS$ are the same as before, so these axioms already hold. Moreover, since the only domains whose coordinate spaces have been changed are $\nest$-minimal, whenever $U\propnest V$ we can define $\rho^U_V$ as before. 
		
        \medskip\textbf{(Transversality.)} If $V\trans W$ and none is in the $G$-orbit of $\ell$, set $\rho^V_W$ as before. If instead, without loss of generality, $V=\ell$, we set $\rho^W_{\ell_\lambda}=\pi_{\ell_\lambda}(P_W)$, where $P_W$ is the product region for $W$. To see that the latter is a uniformly bounded subset of $L_\lambda$, first notice that $W\trans \ell$ implies $W\trans \ell^\orth$ by \Cref{notation:tweakable_hhg}.\eqref{need:orth_compl}. Furthermore, $\pi_\ell(P_W)\sim \rho^W_\ell$ and $\pi_{\ell^\orth}(P_W)\sim \rho^W_{\ell^\orth}$, by definition of the product region, and therefore $\pi_{\ell_\lambda}(P_W)$ is uniformly bounded by \Cref{claim:what_determines_pilambda}. Notice also that the new relative projections are $G$-equivariant by construction.

        \medskip\textbf{(Consistency.)}
        Let $x\in Z$, and let $V,W$ be two transverse domains. If none is a translate of $\ell$ there is nothing to prove, so up to the $G$-action assume that $V=\ell$. Suppose first that $W\not\in G\cdot \ell$; we shall prove the consistency inequality holds with some constant which is at least $4E$. Indeed, if $\dist_{W}(x,\rho^\ell_W)>4E$ then $\dist_{W}(x,\rho^{\ell^\orth}_W)> E$, where we used \Cref{lem:DHS}, the fact that $W$ is also transverse to $\ell^\orth$ by \Cref{notation:tweakable_hhg}.\eqref{need:orth_compl}, and that $\diam\rho^\ell_W\le E$ by definition of the HHG structure. The original consistency inequalities in $(Z,\frakS)$ then yield that $\max\{\dist_{\ell}(x,\rho^W_\ell),\dist_{\ell^\orth}(x,\rho^W_{\ell^\orth})\}\le E$, so $\dist_{L_\lambda}(x, \rho^W_{\ell_\lambda})=\dist_{L_\lambda}(x, P_W)$ is uniformly bounded by \Cref{claim:what_determines_pilambda}.
        
        Now suppose that $\ell\trans g\ell$ for some $g\in G-\{1\}$. By repeatedly applying \Cref{notation:tweakable_hhg}.\eqref{need:orth_compl} we see that $\ell\trans g\ell^\orth$, $\ell^\orth\trans g\ell$, and $\ell^\orth\trans g\ell^\orth$. Now, by \Cref{claim:what_determines_pilambda} we can assume that $\dist_{g\ell_\lambda}(x,\rho^\ell_{g\ell_\lambda})=\dist_{g\ell_\lambda}(x,P_\ell)$ is large enough that $\max\{\dist_{g\ell}(x,\rho^\ell_{g\ell}),\dist_{g\ell^\orth}(x,\rho^\ell_{g\ell^\orth})\}>4E$. Then the above argument, where $W$ is whichever between $g\ell$ and $g\ell^\orth$ realises the maximum, runs verbatim to show that $\dist_{L_\lambda}(x, \rho^W_{\ell_\lambda})$ is uniformly bounded. If $W=g\ell$ we are done. If instead $W=g\ell^\orth$ then notice that $\rho^{g\ell}_{\ell}\sim \rho^{g\ell^\orth}_\ell$ in $\C \ell$ and $\rho^{g\ell}_{\ell^\orth}\sim \rho^{g\ell^\orth}_{\ell^\orth}$ in $\C \ell^\orth$ by \Cref{lem:DHS}; therefore \[\rho^{g\ell}_{\ell_\lambda}=\pi_{L_\lambda}(P_{g\ell})\sim \pi_{L_\lambda}(P_{g\ell^\orth})= \rho^{g\ell^\orth}_{\ell_\lambda},\] where we used \Cref{claim:what_determines_pilambda} and how product regions project to domains.
        
        Finally, let $U\propnest V$ and assume both $U$ and $V$ have a well-defined relative projection to $W$. Again there is nothing to prove if $W\not\in G\cdot \ell$, so suppose $W=\ell$ up to the $G$-action. Since $\ell$ is $\nest$-minimal, we must have $V\trans  \ell$ and $U\trans  \ell$, so $V\trans  \ell^\orth$ and $U\trans  \ell^\orth$ as well by \Cref{notation:tweakable_hhg}.\eqref{need:orth_compl}. By the original consistency inequalities, $\rho^U_\ell\sim \rho^V_\ell$ and $\rho^U_{\ell^\orth}\sim \rho^V_{\ell^\orth}$; thus
        \[\rho^U_{\ell_\lambda}=\pi_{L_\lambda}(P_U)\sim\pi_{L_\lambda}(P_V) =\rho^V_{\ell_\lambda},\]
        where we again used \Cref{claim:what_determines_pilambda} and how the product regions $P_U$ and $P_V$ project to $\C\ell$ and $\C\ell^\orth$.

        \medskip\textbf{(Bounded geodesic image.)} Let $x,y\in G$ and let $V\propnest W$; since we only replaced minimal domains, we just have to consider the case $V=\ell$. 
        Notice that $W\not\bot\ell^\orth$, since $(\ell^\orth)^\orth=\ell$ by \Cref{notation:tweakable_hhg}.\eqref{need:orth_compl}, and clearly $W\not\nest \ell^\orth$ or it would be orthogonal to $\ell$. Then either $\ell^\orth$ is either transverse to, or properly contained inside, $W$.
        Now let $\gamma\subseteq \C W$ be a geodesic between $\pi_W(x)$ and $\pi_W(y)$. If $\dist_W(\gamma, \rho^\ell_W)>4E$, then $\dist_W(\gamma, \rho^{\ell^\orth}_W)>E$ as well by \Cref{lem:DHS}. Now the original bounded geodesic image axiom tells us that $\dist_\ell(x,y)\le E$; furthermore, by either the bounded geodesic image axiom (if $ \ell^\orth\propnest W$) or the consistency inequalities (if $\ell^\orth\trans W$), we also get that $\dist_{\ell^\orth}(x,y)$ is uniformly bounded. Then $\dist_{L_\lambda}(x,y)$ is uniformly bounded by \Cref{claim:what_determines_pilambda}.
		
        \medskip\textbf{(Partial realization.)} The proof of this axiom will rely on the following claim, which also establishes the ``furthermore" part of the Theorem:
        \begin{claim}\label{claim:action_of_T}
            $\langle T\rangle$ acts loxodromically on $L_\lambda$ and with uniformly bounded orbits on $\C W$ whenever $W\in \frakS_\lambda$ is orthogonal to $\ell$.
        \end{claim}

        \begin{claimproof}
            Since $\theta_\lambda(T)=\phi(T)\neq 0$, $\langle T\rangle$ still acts loxodromically on $L_\lambda$. If $W\orth\ell$ is not in the $G$-orbit of $\ell$, then $\langle T\rangle$ already acted with uniformly bounded orbits on $\C W$ by \Cref{notation:tweakable_hhg}.\eqref{need:dehntwist}, so there is nothing to prove. If instead $\ell\orth g\ell$ for some $g\in G$, by symmetry it is enough to show that $gTg^{-1}$ acts with uniformly bounded orbits on $L_\lambda$. Recall that $gTg^{-1}$ fixes by $\ell$ \Cref{notation:tweakable_hhg}.\eqref{need:dehntwist}, so it belongs to $\Stab{G}{\ell}$; moreover, again by assumption, $g\langle T\rangle g^{-1}$ acts with uniformly bounded orbits on the original $\C\ell$, so it must commute with $T$ which acts loxodromically. Thus $gTg^{-1}$ belongs to the kernel of the $\Stab{G}{\ell}$-action on $L_\lambda$. By \Cref{lem:quasiline_from_qc}.\eqref{item:qc3}, we are therefore left to prove that $\theta_\lambda(gTg^{-1})=0$. This is because $\phi(gTg^{-1})=0$ (here we are using that $\phi$ is the Busemann quasicocycle for the action on $\C \ell$); moreover, $\psi(gTg^{-1})=0$ by \Cref{cor:bounded_orbit_implies_trivial}, since $gTg^{-1}$ fixes the bounded subset $\rho^{g\ell}_{\ell^\orth}\subset \C \ell^\orth$.
        \end{claimproof}

        We now turn to the proof of partial realisation. Let $\mathcal V=\{V_1,\ldots,V_k\}\subset \frakS$ be an orthogonal collection. Up to reordering $\mathcal V$ suppose that $V_i=g_i\ell$ if and only if $i\le T$ for some $T\le k$. Fix $t\le T$; then for all $i\le t$ choose $p_i\in g_iL_\lambda$, and for all $i>t$ choose $p_i\in \C V_i$. We shall prove that, for every $t$, there exist a uniform constant $K_t$ and an element $g\in G$ $K_t$-coarsely realising the given coordinates, meaning that:
        \begin{itemize}
            \item For all $i\le t$, $\dist_{g_iL_\lambda}(g,p_i)\le K_t$;
            \item For all $i> t$, $\dist_{\C V_i}(g,p_i)\le K_t$;
            \item If $W\not \in G\cdot \ell$ is transverse to, or properly contains, some $V_i$, then $\dist_{\C W}(g,\rho^{V_i}_W)\le K_t$;
            \item If $W=g\ell$ for some $g\in G$ and $W$ is transverse to, or properly contains, some $V_i$, then $\dist_{gL_\lambda}(g,\rho^{V_i}_{g\ell_\lambda})\le K_t$.
        \end{itemize}
        Since since there is a uniform bound on the cardinality of orthogonal collections by \Cref{lem:pairwise_orth_are_at_most_E}, there are finitely many possible values of $t$, so setting $K=\max K_{t}$ will give a uniform constant not depending on $t$. This will then imply the partial realisation axiom for $(G,\frakS_\lambda)$, which is the case $t=T$. 

        We shall proceed by induction on $t$, so we first assume $t=0$. By the partial realization axiom for $(Z,\frakS)$, there exists $g\in G$ whose coordinates in $\frakS$ $E$-coarsely realise the tuple $\{(V_i,p_i)\}$, so the only bullet point of the above list we have to check is the last one. Up to the $G$-action by translation, suppose some $V_i$ is transverse to $\ell$, and therefore also to $\ell^\orth$ by \Cref{notation:tweakable_hhg}.\eqref{need:orth_compl}. By definition of the realisation point in $\frakS$, $\max\{\dist_{\ell}(g,\rho^{V_i}_{\ell}),\dist_{\ell^\orth}(g,\rho^{V_i}_{\ell^\orth})\}\le E$; in turn, by \Cref{claim:what_determines_pilambda}, there exists a uniform constant $K_0\ge E$ such that $\dist_{L_\lambda}(g,\rho^{V_i}_{\ell_\lambda})\le K_0$, and we are done.

        Now suppose $t'>0$ and that the conclusion holds with $t'-1$ coordinates chosen from the new quasilines. Up to the $G$-action, suppose that $V_1=\ell$, and choose any $q\in \C \ell$. By induction, the tuple $\{(\ell,q)\}\cup\{(V_i,p_i)\}_{i=2,\ldots,k} $ is $K_{t-1}$-coarsely realised by some $g_0\in G$. By \Cref{claim:action_of_T} the $\langle T\rangle$-action on $L_\lambda$ is $C$-cobounded for some $C$, so we can find a power $m\in \Z$ such that $g\coloneq T^mg_0$ satisfies $\dist_{L_\lambda}(g,p_1)\le C$. We now check that $g\in G$ $K_t$-coarsely realises $\{(V_i,p_i)\}_{i=1,\ldots, k}$, for some uniform constant $K_t\ge 0$.
         \begin{itemize}
            \item By construction $\dist_{L_\lambda}(g,p_1)\le C$; moreover, since $\dist_{L_\lambda}(g_0,p_i)\le K_{t-1}$ for all $2\le i\le t$, and since $\langle T\rangle$ acts with uniformly bounded orbits on $g_i \ell$ by \Cref{claim:action_of_T}, we see that $\dist_{L_\lambda}(g,p_i)$ is uniformly bounded. 
            \item For the same reason as above, $\dist_{\C V_i}(g,p_i)$ is uniformly bounded for all $i>t$.
            \item Suppose $W\not \in G\cdot \ell$ is transverse to, or properly contains, some $V_i$. Since $\langle t\rangle$ fixes every $V_i$ by \Cref{notation:tweakable_hhg}, $T^{-m}W$ is also transverse to, or properly contains, some $V_i$. Thus, by $G$-equivariance of the original (relative) projections, $\dist_{\C W}(g,\rho^{V_i}_W)=\dist_{\C (g^{-1}W)}(g_0,\rho^{V_i}_{g^{-1}W})\le K_{t-1}.$
            \item The same argument, but using that the new projections are $G$-equivariant, shows that, whenever $W=g\ell$ is transverse to, or properly contains, some $V_i$, then $\dist_{gL_\lambda}(g,\rho^{V_i}_{g\ell_\lambda})\le K_{t-1}$.
        \end{itemize}

            \medskip\textbf{(Uniqueness.)} Let $r\ge 0$, and suppose $x,y\in Z$ satisfy $\dist_{W}(x,y)\le r$ for all $W\in \frakS-G\cdot \ell$ and $\dist_{gL_\lambda}(x,y)\le r$ for all $g\in G$. If we prove that $\dist_{\C (g \ell)}(x,y)$ is uniformly bounded (meaning, independently of $g$, $x$, and $y$), then the uniqueness axiom for the original structure will bound $\dist_Z(x,y)$ in terms of $r$; furthermore, up to the $G$-action by translation, it is enough to show that $\dist_{\C (\ell)}(x,y)$ is uniformly bounded (meaning, independently of $x$ and $y$). Let $x'=\gate_\ell(x)$ and $y'=\gate_\ell(y)$, which both lie in $P_\ell=\Stab{G}{\ell}$; notice that, by how the gate is defined, $x'$ (resp. $y'$) projects $E$-close to $x$ (resp. $y$) both in $\C\ell$ and $\C\ell^\orth$. Since $\theta_\lambda\colon\Stab{G}{\ell}\to L_\lambda$ is a quasi-isometry, and since $\pi_{L_\lambda(x)}$ (resp. $\pi_{L_\lambda(x)}$) is the image of $x'$ (resp. $y'$) in $L_\lambda(x)$, the difference 
        \[|\theta_\lambda(x')-\theta_\lambda(y')|=|(\phi(x')-\phi(y'))+\lambda(\psi(x')-\psi(y'))|\]
        is uniformly bounded. Furthermore, since $\psi\colon \Stab{G}{\ell}\to \R$ is $\C\ell^\orth$-coarsely-Lipschitz, we also get that $|\psi(x')-\psi(y')|$ is uniformly bounded; hence $|\phi(x')-\phi(h')|$ is uniformly bounded by the triangular inequality. But then this means that $\dist_{\C\ell}(x',y')$ is uniformly bounded, because in \Cref{construction:newline} we assumed that $\C\ell$ is the quasiline associated to $\phi$ and therefore $\phi\colon \C\ell\to \R$ is a quasi-isometry.

        \medskip\textbf{(Large links.)} We shall prove the passing up axiom \ref{axiom:passing_up}. Fix a constant $D$, greater than the original HHS constant $E$, such that, for any two points $x,y\in Z$, if $\dist_{\C\ell}(x,y)\le E$ and $\dist_{\C\ell^\orth}(x,y)\le 3E$, then $\dist_{L_\lambda}(x,y)\le D$, which exists by \Cref{claim:what_determines_pilambda}. Now fix $t>0$, and let $P(t)$ be the integer appearing in the Passing up axiom for $(Z,\frakS)$. We shall prove the passing up axiom with HHS constant at least $D$ and with at least $Q(t)=2P(t)-2+P(21E)$ domains. More precisely, we shall prove the contrapositive: given $V\in \frakS$, $x,y\in Z$, and $\{U_i\}_{i=1}^Q\subseteq \frakS_V$ satisfying $\dist_{U_i}(x,y)>D$, we shall assume that no domain $W$ inside $V$ properly contains some $U_i$ and is such that $\dist_W(x,y)>t$, and we shall prove that $Q\le Q(t)$. 
        
        By the original passing up axiom, at most $P(t)-1$ of the $U_i$ are not in the $G$-orbit of $\ell$, so all the other $U_i$ are of the form $g_i \ell$ for some $g_i\in G$. For each such $i$, either $\dist_{\C(g_i\ell)}(x,y)> E$ or $\dist_{\C(g_i\ell)^\orth}(x,y)> 3E$, by how we defined $D$. Again, the original passing up axiom implies that at most $P(t)-1$ of the $g_i \ell$ can satisfy $\dist_{\C(g_i\ell)}(x,y)> E$, so there are at least $Q'=Q-2P(t)-2$ domains of the form $U_i=g_i\ell$ satisfying $\dist_{\C(g_i\ell)^\orth}(x,y)> 3E$.

        \begin{figure}[htp]
            \centering
            \includegraphics[width=0.8\linewidth]{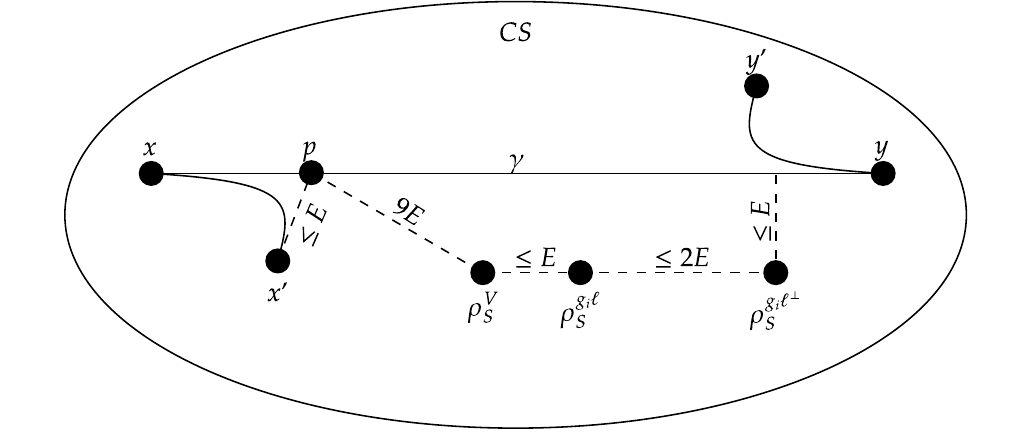}
            \caption{The projections to $\C S$ of the various domains and points appearing in the proof of the Passing up axiom. The geodesic between $x$ and $x'$ is outside the $6E$-neighbourhood of $\rho^V_S$, and in particular it does not pass $E$-close to any $\rho^{g_i\ell^\orth}_S$ in the collection. }
            \label{fig:passingup}
        \end{figure}

        We are left to prove that $Q'\le P(21E)$. Consider the projection of $x$, $y$, and the $Q'$ domains above in the top-level coordinate space $\C S$: see \Cref{fig:passingup} to understand the situation. All $g_i\ell$ are nested in $V$, so $\dist_S(\rho^{g_i\ell}_S, \rho^{V}_S)\le E$ by consistency. Furthermore, $\dist_S(\rho^{g_i\ell}_S, \rho^{g_i\ell^\orth}_S)\le 2E$ by \Cref{lem:DHS}. Now let $\gamma =[\pi_S(x), \pi_S(y)]\subseteq \C S$ be a geodesic; since $\dist_{\C(g_i\ell)^\orth}(x,y)> 3E$, $\gamma$ intersects the $E$-neighbourhood of $\rho^{g_i\ell}_S$ by bounded geodesic image, and therefore the $(6E)$-neighbourhood of $\rho^V_S$ (here we are using that relative projections have diameter at most $E$). Let $p\in \C S$ be the first point on $\gamma$ within distance $9E$ from $\rho^V_S$. If $p=x$ define $x'=x$; else, let $x'\in G$ project $E$-close to $p$ in $\C S$, which exists because $\pi_S$ is $E$-coarsely-onto. Notice that any geodesic in $\C S$ connecting $x$ and $x'$ avoids the $(6E)$-neighbourhood of $\rho^V_S$ (here we are using that triangles are $E$-slim in $\C S$), and therefore $\dist_{\C(g_i\ell)^\orth}(x,x')\le E$ for all the $Q'$ domains by the bounded geodesic image theorem. Now define $y'$ analogously on the side of $y$, so that $\dist_{\C(g_i\ell)^\orth}(y,y')\le E$ for all the $Q'$ domains. Notice that every $W\in \frakS-\{S\}$ containing some $g_i\ell^\orth$ is bounded by \Cref{notation:tweakable_hhg}.\eqref{need:orth_compl}, and therefore has diameter at most $E$ by the bounded domain dichotomy, \Cref{rem:BDD}. Notice also that 
        \[\dist_S(x',y')\le \dist_S(x',\rho^V_S)+\dist_S(y',\rho^V_S)+E\le 21E,\]
        so the two facts combine to give that $\dist_W(x',y')\le 21E$ whenever $W$ properly contains some $g_i\ell^\orth$. However, for all $i=1,\ldots, Q'$, we have
        \[\dist_{\C(g_i\ell)^\orth}(x',y')\ge \dist_{\C(g_i\ell)^\orth}(x,y) -\dist_{\C(g_i\ell)^\orth}(x,x')-\dist_{\C(g_i\ell)^\orth}(y,y')>E.\]
        The original passing up axiom, applied to the collection $\{g_i \ell^\orth\}_{i=1,\ldots, Q'}$ inside $S$, then implies that $Q'\le P(21E)$, as required.
       
         \medskip\textbf{(Equivariance)} The $G$-action on $\frakS$ is the same as before, and in particular it is cofinite. Moreover, as pointed out above, all projections to and between domains are $G$-equivariant, so the above data defines a HHG structure for $G$.
\end{proof}
 \noindent We are finally ready to prove the main technical theorem:
\begin{thm}\label{thm:uncountably_many_CM}
    If $G$ is as in \Cref{notation:tweakable_hhg}, it admits a continuum of equivariant coarse median structures.
\end{thm}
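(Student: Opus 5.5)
For each $\lambda\in\R$, \Cref{prop:new_hhg_structures} gives a HHG structure $(G,\frakS_\lambda)$. By \Cref{ex:coarse_median_from_HHS} this structure induces a $G$-equivariant coarse median $\mu_\lambda$ on $Z$. It is characterised by the property that every projection $\pi_W$ is coarse median preserving; in particular, $\pi_{\ell_\lambda}\colon Z\to L_\lambda$ followed by the quasi-isometry $\theta_\lambda\colon L_\lambda\to\R$ sends $\mu_\lambda$ to the median on $\R$. It therefore suffices to find an uncountable set of parameters, I would aim for all of $\R$ or at least an uncountable subset, such that $\mu_{\lambda_1}\not\sim\mu_{\lambda_2}$ whenever $\lambda_1\neq\lambda_2$.

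To distinguish two structures I would work inside $\Stab{G}{\ell}\cong P_\ell$, where $\pi_{\ell_\lambda}$ is just the identity to $L_\lambda$ and so coarsely equals $\theta_\lambda=\phi+\lambda\psi$. Fix an element $h\in K\subseteq\Stab{G}{\ell}$ with $\psi(h)\neq0$. Such an $h$ exists because $\psi$ is a non-trivial homogeneous quasimorphism; replacing $h$ by $h^2$ puts it in $K$. We may also arrange $\phi(h)=0$, for instance by taking $h$ with bounded orbits on $\C\ell$, or else by replacing $h$ with $hT^k$ and using $\phi(T)\neq 0$ together with the fact that $T$ commutes coarsely up to sign. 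Let $T$ be the twist, with $\phi(T)=a\neq0$ and $\psi(T)=0$; the latter holds by \Cref{cor:bounded_orbit_implies_trivial}, since $p_\ell(T)=1$.

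For integers $n,m$ consider the elements $T^n h^m$. Their coordinates are roughly
\[
\theta_\lambda(T^n h^m)\approx na+\lambda m\,\psi(h),
\]
up to a bounded error times the defect. This is the only place where care is needed: the quasicocycle errors accumulate additively with the number of factors, and not with $n$ or $m$, so the error stays uniform. Now take three points $x=1$, $y=T^{n}$, $z=h^{m}$ with $n a$ and $m\psi(h)$ chosen large. In the $\lambda$-structure, the median of $\theta_\lambda(x),\theta_\lambda(y),\theta_\lambda(z)$ in $\R$ is determined by the ratio $na/(m\psi(h))$ compared with $\lambda$. For $\lambda_1<\lambda_2$, choose the ratio so that $z$ lies between $x$ and $y$ for $\theta_{\lambda_1}$ and outside that interval for $\theta_{\lambda_2}$, with gaps tending to infinity as $n,m$ are scaled up. Because $\theta_\lambda\circ\pi_{\ell_\lambda}$ is coarse median preserving, the point $\mu_{\lambda_1}(x,y,z)$ then has $\theta_{\lambda_1}$-coordinate close to $\theta_{\lambda_1}(z)$, whereas $\mu_{\lambda_2}(x,y,z)$ has $\theta_{\lambda_2}$-coordinate close to an endpoint.

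To turn this into an unbounded distance in $Z$, I would compare both medians through the same Lipschitz maps. The maps $\phi$ and $\psi$ are each coarsely Lipschitz on $Z$: $\phi$ factors through $\pi_\ell$, and $\psi$ through $\pi_{\ell^\orth}$ after gating. The medians $\mu_{\lambda_i}$ agree on all non-$\ell$ domains, so their $\pi_{\ell^\orth}$-projections, and hence their $\psi$-values, are coarsely equal; here I use that $\gate_\ell$ of a median lies near $P_\ell$ for points already in $P_\ell$. On the other hand, the $\theta$-descriptions force the $\phi$-values (or the $\psi$-values) of $\mu_{\lambda_1}(x,y,z)$ and $\mu_{\lambda_2}(x,y,z)$ to differ by an amount growing linearly in the scale. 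This gives $\sup\dist(\mu_{\lambda_1},\mu_{\lambda_2})=\infty$.

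I expect the main obstacle to be this last bookkeeping step: producing a single coarsely Lipschitz function to $\R$ that is median-preserving for neither structure but detects the discrepancy, while keeping every error uniform. If it becomes messy, I would fall back on the argument of \cite[Theorem 7.1]{short_HHG:I}, which is the same idea in the short HHG setting.
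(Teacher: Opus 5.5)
Your setup matches the paper's: the structures $\frakS_\lambda$, a test triple $1,T^n,h^m$ in $\Stab{G}{\ell}$, and reading off the $L_{\lambda_i}$-coordinate of $m_i=\mu_{\lambda_i}(1,T^n,h^m)$ from which $\theta_{\lambda_i}$-value lies in the middle. However, the last step, which you flag as the obstacle, does not follow from what you wrote.

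Put $p_i=\phi(\gate_\ell(m_i))$ and $s_i=\psi(\gate_\ell(m_i))$, and let $M_i$ be the middle one of the three $\theta_{\lambda_i}$-values. Median preservation of $\pi_{\ell_{\lambda_i}}$ only gives $p_i+\lambda_i s_i\approx M_i$. Your correct observation $s_1\approx s_2=:s$ then yields $p_1-p_2\approx (M_1-M_2)-(\lambda_1-\lambda_2)s$. With $s$ unknown this is consistent with $p_1\approx p_2$: take $s\approx (M_1-M_2)/(\lambda_1-\lambda_2)$, which grows linearly in your scale. So the $\theta$-descriptions alone force neither the $\phi$-values nor the $\psi$-values apart.

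What is missing is an actual computation of the $\C\ell^\orth$-coordinate of the medians, and this is where \Cref{notation:tweakable_hhg}.\eqref{need:dehntwist} enters. Since $\langle T\rangle$ has uniformly bounded orbits on $\C\ell^\orth$, we have $\pi_{\ell^\orth}(T^n)\approx\pi_{\ell^\orth}(1)$. Hence the median in $\C\ell^\orth$, and therefore $\pi_{\ell^\orth}(m_i)$ for both $i$, lies near $\pi_{\ell^\orth}(1)$. Gating preserves the $\ell^\orth$-coordinate, and $\psi$ is coarsely Lipschitz with respect to $\C\ell^\orth=\Cay{\Stab{G}{\ell}}{X}$, so $s_i\approx\psi(1)=0$. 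Then $p_i\approx M_i$, and these differ linearly in the scale. Finally, $\phi\circ\gate_\ell$ is coarsely $\phi\circ\pi_\ell$, hence coarsely Lipschitz on $Z$. With this added, your argument closes.

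For comparison, the paper uses the same property of $T$, but for all domains at once. The points $1$ and $T^k$ project uniformly close in every domain other than $\ell$: on domains orthogonal to $\ell$ by bounded orbits, and near $\rho^\ell_U$ when $\ell\trans U$ or $\ell\propnest U$. So when the $\lambda_1$-middle element $x_1$ is $1$ or $T^k$, the uniqueness axiom places $\mu_{\lambda_1}(1,T^k,g^l)$ uniformly close to $x_1$ itself. One then compares $\theta_{\lambda_2}(x_1)=\phi(x_1)$ with the $\theta_{\lambda_2}$-middle value inside $L_{\lambda_2}$. In your configuration the $\lambda_2$-middle element is $T^n$, so the paper would pin $m_2$ near $T^n$ and measure in $L_{\lambda_1}$. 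Your repaired route needs only the $\ell^\orth$-coordinate and avoids the uniqueness axiom, at the cost of the $\phi$/$\psi$ bookkeeping.

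A minor point: replacing $h$ by $hT^k$ cannot make $\phi(h)=0$ unless $\phi(h)\in\mathbb{Z}\phi(T)$. Indeed, $h\in K$ commutes with $T$, so $\phi$ is additive on $\langle h,T\rangle$. But you never need $\phi(h)=0$, since $\theta_{\lambda_1}(h)\neq\theta_{\lambda_2}(h)$ suffices.
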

\begin{proof} The proof will follow the same lines as that of \cite[Theorem 7.1]{short_HHG:I}. For every $\lambda\in \R$ let $\mu_\lambda$ be the coarse median structure associated to the HHG structure $(G,\frakS_\lambda)$, which is constructed as in \Cref{ex:coarse_median_from_HHS}. Our goal is to show that, as we let $\lambda$ vary in $\R$, the coarse medians $\mu_\lambda$ are all non-equivalent. Fix $\lambda_1\neq \lambda_2$, and recall from \Cref{construction:newline} that for $i=1,2$ the quasiline $L_{\lambda_i}$ is constructed via \Cref{lem:quasiline_from_qc} from the quasicocycle
    \[\theta_{\lambda_i}\coloneq\phi+\lambda_i\psi\colon \Stab{G}{\ell}\to \R.\]
    Fix $g\in \Stab{G}{\ell}$ with $\psi(g)\neq 0$, which exists because $\psi$ is a non-trivial homogeneous quasimorphism, and consider triples of the form $1,T^k, g^l$, for some integers $k$ and $l$. For $i=1,2$ let \[m_i=\mu_{\lambda_i}(1,T^k, g^l)\in Z.\] We want to show that, by choosing appropriate values of $k$ and $l$, the quantity \[\min\{\dist_{L_{\lambda_1}}(m_1, m_2),\dist_{L_{\lambda_2}}(m_1, m_2)\}\] can be made arbitrarily large. This will imply that $m_1$ and $m_2$ can be made arbitrarily far in $Z$, because the projections from $Z$ to both ${L_{\lambda_1}}$ and ${L_{\lambda_2}}$ are coarsely Lipschitz.

To determine the projection of $m_i$ to the quasiline $L_{\lambda_i}$, we have to determine which of the numbers $\theta_{\lambda_i}(1),\theta_{\lambda_i}(T^k), \theta_{\lambda_i}(g^l)$ lies between the other two, because then the projection of $m_i$ will coarsely coincide with that of the corresponding $x_i\in\{1,T^k, g^l\}$. Since $\theta_{\lambda_1}(g)\neq \theta_{\lambda_2}(g)$, we can choose values of $k$ and $l$ such that $x_1\neq x_2$; thus, without loss of generality, $x_1$ is either $1$ or $T^k$, and in particular $\theta_{\lambda_1}(x_1)=\theta_{\lambda_2}(x_1)=\phi(x_1)$. 

\begin{claim}
    $m_1$ is uniformly close to $x_1$, i.e. their distance is bounded independently of $k$ and $l$.
\end{claim}
\begin{claimproof}
    By the uniqueness axiom for the structure $(G,\frakS_{\lambda_2})$, it is enough to show that $\pi_U(m_1)$ uniformly coarsely coincides with $\pi_U(x_1)$ for every domain $U$ (where projections are with respect to the structure $(G,\frakS_{\lambda_2})$). We already know that $\pi_{L_{\lambda_1}}(m_1)$ uniformly coarsely agrees with $\pi_{L_{\lambda_1}}(x_1)$, so by how the coarse median was constructed in \Cref{ex:coarse_median_from_HHS} it is enough to show that $1$ and $T^k$ project uniformly close to every other domain $U$ (in the structure $(G,\frakS_{\lambda_1})$). If $U\orth \ell$ then this is because $\langle z\rangle$ acts with uniformly bounded orbits on the coordinate space $\C U$, by the ``furthermore" part of \Cref{prop:new_hhg_structures}; if instead $\ell\trans  U$ or $\ell\nest U$ then this follows from e.g. \cite[Proposition 2.17]{HRSS_3manifold}.
\end{claimproof}
We can now prove that $\dist_{L_{\lambda_2}}(m_1, m_2)$ can be made arbitrarily large. Indeed, by the above Claim and by the definition of $x_2$, we can replace $m_1$ by $x_1$ and $m_2$ by $x_2$; then, since $\theta_{\lambda_2}\colon L_{\lambda_2}\to \R$ is a quasi-isometry, it is enough to notice that the difference 
$$|\theta_{\lambda_2}(x_1)-\theta_{\lambda_2}(x_2)|=|\theta_{\lambda_1}(x_1)-\theta_{\lambda_2}(x_2)|$$
can be made arbitrarily large.
\end{proof}

\section{Applications}\label{sec:applications}
This Section is devoted to checking that extended mapping class groups and several right-angled Artin groups fit the framework of \Cref{thm:uncountably_many_CM}. 
\subsection{Extended mapping class groups}
Let $S=S^g_{b,p}$ be obtained from a closed, connected, oriented surface of genus $g$ by removing $b$ open disks and $p$ points; we call $S$ a \emph{oriented surface of finite-type}. The \emph{complexity} of $S$ is $\chi(S)=3g-b-p-3$. The \emph{extended mapping class group} of $S$, denoted $\MCG(S)$, is the group of self-homeomorphisms of $S$ fixing the boundary pointwise, up to isotopy relative to the boundary; notice that we allow orientation-reversing homeomorphisms, while the usual mapping class groups is the index-two subgroup that preserves the orientation. We refer to \cite{FarbMargalit} for further background on mapping class groups of surfaces. We are now ready to prove \Cref{thmintro:mcg} from the introduction:
\mcg*

\begin{proof}
	First, we assume that $S$ has no boundary components, and check that $\MCG(S)$ satisfies the requirements of \Cref{notation:tweakable_hhg}. By results of many authors \cite{MM1, MM2, Behrstock, BKMM, HHS_I}, $\MCG(S)$ is hierarchically hyperbolic; here we shall use the explicit HHG structure from \cite[Theorem 9.8]{converse}, where:
\begin{itemize}
    \item $\frakS$ is the collection of isotopy classes of subsurfaces whose connected components are not pairs of pants. The precise definition of the word ``subsurface" is a bit subtle (see \cite[Section 9.1]{converse}), but we won't need to go into details here. 
    \item Nesting and orthogonality respectively correspond to subsurface inclusion and disjointness, up to isotopy (with some little caveats, again see \cite[Section 9.1]{converse}).
    \item For every subsurface $U\in\frakS$ which is not an annulus, $\C U$ is the \emph{curve graph} of $U$, that is, the simplicial graph whose vertices are isotopy classes of simple closed curve on $U$ and edges connect curves with the minimal intersection number; if $U$ has complexity at least $2$, this is equivalent to connecting two curves if they are disjoint. If $U\in \frakS$ is an annulus, $\C U$ is the \emph{annular curve graph} from \cite[Section 2.4]{MM2}, which is a quasiline on which the Dehn twist along $U$ acts loxodromically. 
\end{itemize}
As proven in \cite[Theorem 9.8]{converse}, $(Z,\frakS)$ has clean containers. Furthermore, the structure has cobounded product regions: this is essentially because the stabiliser of a subsurface $U$ is virtually the product of the mapping class groups of the mapping class groups of the connected components of $U$ and $S-U$. We therefore checked \Cref{notation:tweakable_hhg}.\eqref{need:prodregions}.

Now, if $S$ has genus at least $1$, let $\ell$ be a non-separating curve; if instead $S$ is a sphere let $\ell$ be a curve cutting out a pair of pants. Notice that $\ell$ is $\nest$-minimal and its annular curve graph is a quasiline, so \Cref{notation:tweakable_hhg}.\eqref{need:annular_curve_graph} holds. If $S$ has genus at least $1$, the orthogonal complement $\ell^\orth$ is precisely the subsurface $S-\ell$, which has complexity at least one by the assumption on the complexity of $S$ and therefore is not a pair of pants or an annulus; if instead $S$ is a sphere, $\ell^\orth$ is the connected component of $S-\ell$ which is not a pair of pants, which exists again by the assumption on the complexity of $S$. In both cases, $\ell^\orth$ is an open, connected subsurface of complexity at least $1$, so its curve graph is unbounded by \cite[Theorem 1.1]{MM1}. Moving to the second requirement of \Cref{notation:tweakable_hhg}.\eqref{need:orth_compl}, if $S$ has genus at least one then $S$ is the unique subsurface containing $\ell^\orth$, while in the case of a punctured sphere the only non-maximal subsurface containing $\ell^\orth$ is its closure, which contains $\ell$ as its boundary and therefore has bounded curve graph. Next, the orthogonal complement of $\ell^\orth$ is $\ell$ itself (in the sphere case, we are using that pair of pants do not belong to the index set). Finally, every subsurface which is transverse to $\ell$ is also transverse to $\ell^\orth$, as $\ell$ is the boundary of the closure of $\ell^\orth$. Hence \Cref{notation:tweakable_hhg}.\eqref{need:orth_compl} is proven.

Finally, let $T$ be the (right) Dehn twist around $\ell$, which is normal in $\Stab{\MCG(S)}{\ell}$ because a mapping class $f\in \MCG(S)$ conjugates $\langle T\rangle$ to the subgroup generated by the Dehn Twist on $f(\ell)$. Notice that $T$ acts loxodromically on the annular curve graph for $\ell$, it fixes every subsurface $U$ which is disjoint from $\ell$, and it acts with uniformly bounded orbits on $\C U$ by \cite[Lemma 2.3]{MM2}; this proves  \Cref{notation:tweakable_hhg}.\eqref{need:dehntwist}. Regarding \Cref{notation:tweakable_hhg}.\eqref{need:acylindricity} , by e.g. \cite[Proposition 3.19]{FarbMargalit} the quotient $H=\Stab{G}{\ell}/\langle T\rangle$ is virtually the mapping class group of $\ell^\orth$, which is not virtually cyclic by the assumption on the complexity and acts acylindrically on $\C \ell^\orth$ by e.g. \cite{Bowditch_tight}.

We finally turn to the case where $S=S^b_{g,p}$ with $b> 0$. Again by e.g. \cite[Proposition 3.19]{FarbMargalit}, there exists a central extension
	\[0\to \Z^b\to \MCG(S)\xrightarrow{q} \mathcal M(S')\to 0,\]
	where $S'$ is obtained from $S$ by gluing a once-punctured disk to each boundary component and $\mathcal M(S')$ is a finite-index subgroup of $\MCG(S')$; notice in particular that, by \Cref{rem: fi_sgr_is_HHG}, $\mathcal M(S')$ inherits a continuum of HHG structures $(\mathcal M(S'),\frakS_\lambda)$ whose associated equivariant coarse median structure are all different. By combining  e.g. \cite[Proposition 6.4]{Fujiwara-Kapovich} and \cite[Theorem 3.12]{FournierFacioMangioniSisto_bcohom_HHG2026}, $\MCG(S)$ has a HHG structure whose coordinate projections are either certain quasilines for the centre, or are obtained by composing the $1$-Lipschitz projection $q\colon \MCG(S)\to\mathcal M(S')$ with the coordinate projections for $\mathcal M(S')$. In particular, by \Cref{ex:coarse_median_from_HHS}, for every $x,y,z\in \MCG(S)$ and every HHS structure for $\mathcal M(S')$, the $q$-projection of the coarse median of $x,y,z$ in $\MCG(S)$ is precisely the coarse median of $q(x),q(y),q(z)$ in $\mathcal M(S')$. In turn, since $q$ is $1$-Lipschitz, the continuum of HHG structures for $\mathcal M(S')$ produces a continuum of equivariant coarse median structures for $\MCG(S)$, as required.
\end{proof}



\subsection{Right-angled Coxeter groups} In what follows, $\Gamma$ is always a simplicial graph, and we denote by $V(\Gamma)$ its vertices and by $E(\Gamma)$ its edges. For a vertex $v\in V(\Gamma)$, we denote its link in $\Gamma$ by $\link{\Gamma}{v}$; for a full subgraph $\Lambda\subseteq\Gamma$ we then define $\link{\Gamma}{\Lambda}=\bigcap_{v\in V(\Lambda)}\link{\Gamma}{v}$, and $\Cstar{\Gamma}{\Lambda}$ is the join of $\Lambda$ and $\link{\Gamma}{\Lambda}$. We refer to the survey \cite{dani_racg} for background and references on RACGs, including for the results we shall invoke without proof. We now restate \Cref{thmintro_coxeter} from the introduction:


\coxeter*

\begin{proof} Let $\Sigma_\Gamma$ be the Davis complex for $W_\Gamma$. We first describe the HHG structure $(\Sigma_\Gamma,\frakS)$ on $W_{\Gamma}$ constructed in \cite[Proposition 8.3]{HHS_I}. The latter is stated for a right-angled Artin group acting on the universal cover of the Salvetti complex  but its proof works also for a right-angled Coxeter group acting on its Davis complex, with minor adjustments due to the fact that the vertex set of $\Sigma_\Gamma$ is not in bijection with $W_{\Gamma}$. For simplicity, whenever possible, we shall also reformulate the structure directly in terms of $\Gamma$ and $W_\Gamma$, without referencing the underlying cubical structure, as in e.g. \cite[page 8]{quasiflats}.

Let $\mathcal F$ be the collection of full subgraphs of $\Gamma$ with the following properties:
\begin{itemize}
\item $\Gamma\in \mathcal F$;
\item If $\emptyset\subsetneq \Delta\subseteq \link{\Gamma}{w}$ for some $w\in V(\Gamma)$, and $\Delta$ either contains both $v_+$ and $v_-$ or none of them, then $\Delta\in \mathcal F$.
\end{itemize} 
Notice that $\mathcal F$ contains all links of vertices, since by assumption $v_+$ belongs to some $\link{\Gamma}{w}$ if and only if $v_-$ does. Moreover, whenever $\Lambda,\Delta\in\mathcal F$ have non-empty intersection, then $\Lambda\cap\Delta\in \mathcal F$. Hence $\mathcal F$ satisfies the requirement of \cite[Proposition 8.3]{HHS_I}, and therefore induces the following HHG structure on $W_\Gamma$. 

For a full subgraph $\Lambda\subseteq \Gamma$, let $W_\Lambda$ be the parabolic subgroup generated by the vertices of $\Lambda$. Let $\frakS=\{gW_\Lambda\}_{\sim}$, where $\Lambda\in \mathcal F$, $g\in W_{\Gamma}$, and $\sim$ is the equivalence relation $gW_\Lambda\sim hW_\Lambda$ if $gh^{-1}$ centralises $W_\Lambda$. Declare $[gW_\Lambda]\nest[gW_{\Lambda'}]$ if $\Lambda\subseteq \Lambda'$, and $[gW_\Lambda]\orth[gW_{\Lambda'}]$ if $\Lambda\subseteq\link{\Gamma}{\Lambda'}$. For every $F\in \frakS$, the associated coordinate space is its \emph{factored contact graph}. We defer its definition to \cite[Definition 8.14]{HHS_I}; the only properties we will need are that:
\begin{itemize}
\item If $\Lambda\in \mathcal F$ is neither a vertex nor a proper join, then $\C [W_\Lambda]$ is unbounded. 
\item If $\{a,b\}\in \mathcal F$ are two non-adjacent vertices, then $\C [W_{\{a,b\}}]$ is a quasiline, on which the dihedral subgroup $W_{\{a,b\}}$ acts geometrically.
\end{itemize}
The coarse median structure induced by $(\Sigma_\Gamma,\frakS)$, as in \Cref{ex:coarse_median_from_HHS}, is precisely the cubical one; this follows e.g. from the arguments in \cite[Sections 22 and 23]{CRHK}.

We now check the requirements of \Cref{notation:tweakable_hhg}. Clean containers is immediate: if $\Lambda\subsetneq\Lambda'$ and both belong to $\mathcal F$, the container for $[gW_\Lambda]$ in $[gW_{\Lambda'}]$ is $[gW_{\Delta}]$, where $\Delta=\Lambda'\cap\link{\Gamma}{\Lambda}\in \mathcal F$ (unless $\Delta$ is trivial, and therefore $[gW_\Lambda]$ is orthogonal to no domain in $[gW_{\Lambda'}]$). Furthermore, the product region associated to $[gW_\Lambda]$ is precisely the Davis complex for $W_{\Cstar{\Gamma}{\Lambda}}=\Stab{W_{\Gamma}}{[gW_\Lambda]}$, on which the latter acts cocompactly; hence \Cref{notation:tweakable_hhg}.\eqref{need:prodregions} is proven.

Now let $\link{\Gamma}{v_\pm}\coloneq \link{\Gamma}{v_+}=\link{\Gamma}{v_-}=\link{\Gamma}{\{v_+,v_-\}}$, and let
$\ell=[\{v_+,v_-\}]$; the latter is indeed an element of $\frakS$ since $\link{\Gamma}{v_\pm}$ is non-empty, so there is some $w\in V(\Gamma)$ whose link contains $\{v_+,v_-\}$. Towards proving \Cref{notation:tweakable_hhg}.\eqref{need:annular_curve_graph}, $[W_{\{v_=,v_-\}}]$ is $\nest$-minimal because neither $\{v^+\}$ nor $\{v^-\}$ belong to $\mathcal F$; furthermore $v_+$ and $v_-$ are not adjacent since $v_+\not\in\link{\Gamma}{v_+}=\link{\Gamma}{v_\pm}$, so as mentioned above $\C \ell$ is a quasiline. 

Moving to \Cref{notation:tweakable_hhg}.\eqref{need:orth_compl}, the orthogonal complement of $\ell$ is $\ell^\orth=[W_{\link{\Gamma}{v_\pm}}]$. Notice that $\link{\Gamma}{v}$ is neither a vertex nor a proper join, so $\C \ell^\orth$ is unbounded. Notice also that $\ell^\orth$ is only nested in $[W_{\Gamma}]$, since no other link of a vertex contains $\link{\Gamma}{v_\pm}$; for the same reason $\link{\Gamma}{\link{\Gamma}{v_\pm}}=\{v_+,v_-\}$, so $(\ell^\orth)^\orth=\ell$. In turn, the two properties combined imply that any $W\trans \ell$ must also be transverse to $\ell^\orth$. Indeed, $W\not\nest \ell^\orth$ or it would be orthogonal to $\ell$, $\ell^\orth\not\propnest W$ or $W$ would coincide with $[W_{\Gamma}]$, and $W\not\bot \ell^\orth$ or it would lie inside $(\ell^\orth)^\orth=\ell$. This proves \Cref{notation:tweakable_hhg}.\eqref{need:orth_compl}.

Next, let $T=v_+v_-\in W_{\Gamma}$, which is normal in $\Stab{W_{\Gamma}}{\ell}=\langle v_+,v_-\rangle\times W_{\link{\Gamma}{v_\pm}}$. Notice that $T$ acts loxodromically on $\C\ell$, it fixes all domains nested inside $W_{\link{\Gamma}{v_\pm}}$ by how we defined the index set, and acts trivially on the associated coordinate spaces, so \Cref{notation:tweakable_hhg}.\eqref{need:dehntwist} holds. Finally, $\Stab{W_{\Gamma}}{\ell}/\langle T\rangle$ is an index-two overgroup of $W_{\link{\Gamma}{v_\pm}}$, which is not virtually cyclic because $\link{\Gamma}{v_\pm}$ is neither a simplex nor the suspension of a (possibly empty) simplex; moreover, it acts acylindrically on $\C\ell^\orth/\langle T\rangle=\C\ell^\orth$ because $(W_{\link{\Gamma}{v_\pm}},\frakS_{\ell^\orth})$ is itself a HHG, and therefore we can apply \Cref{lem:acyl_action_on_CS}. This proves \Cref{notation:tweakable_hhg}.\eqref{need:acylindricity}, and concludes the proof of \Cref{thmintro_coxeter}.
\end{proof}

\noindent The coarse median structures from \Cref{thmintro_coxeter} are all equivalent to the standard cubical one when restricted to virtually Abelian parabolic subgroups:
\begin{lem}\label{cor:new_median_is_the_same_on_DT_RACG} Let $H$ be a product of Dihedral parabolic subgroups, and for all $\lambda\in \R$ let $\mu_\lambda$ be the coarse median induced by $(\Sigma_\Gamma,\frakS_\lambda)$. Then $H$ is $\mu_\lambda$-quasiconvex, and the restriction of $\mu_\lambda$ to $H$ is equivalent to the standard cubical coarse median.
\end{lem}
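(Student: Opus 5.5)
Write $H=W_{\{a_1,b_1\}}\times\cdots\times W_{\{a_k,b_k\}}$, with $a_j,b_j$ non-adjacent and each pair joined to every other pair. The plan is to compute $\mu_\lambda$ on $H$ one coordinate space at a time and compare with $\mu_{\frakS}$, the cubical median from the original structure. By \Cref{ex:coarse_median_from_HHS} and the uniqueness axiom, it suffices to find a uniform $C$ such that, for all $x,y,z\in H$ and every domain $U$, $\pi_U(\mu_\lambda(x,y,z))$ and $\pi_U(\mu_\frakS(x,y,z))$ are $C$-close in the $\frakS_\lambda$-coordinate space $\C U$. Quasiconvexity will follow the same way: $H$ is cubically convex (it is a parabolic subgroup), so $\mu_\frakS(x,y,z)\in H$. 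Once we show $\mu_\lambda$ and $\mu_\frakS$ agree uniformly on $H^3$, we get $\mu_\lambda(x,y,z)\in N_C(H)$ for $x,y,z\in H$. The quasiconvexity definition also allows one point outside $H$; for that case I would apply gates $\gate_{P}$ onto the product region of $H$, using that the new projections for translates of $\ell$ factor through gates. Alternatively, it may be enough to prove equivalence on triples in $H$, which is what the restriction statement needs.

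\textbf{Step 1.} For every $U\notin G\cdot\ell$, $\C U$ and $\pi_U$ are unchanged by \Cref{prop:new_hhg_structures}. So on these domains both medians project to the same coarse centre in the same hyperbolic space, and there is nothing to check.

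\textbf{Step 2.} The remaining domains are $U=g\ell$, with coordinate space $gL_\lambda$ and projection $\pi_{g\ell_\lambda}=\gate_{g\ell}$ followed by the identity onto $gL_\lambda$. I would show that, for every $g$, the set $\gate_{g\ell}(H)$ has image in $gL_\lambda$ that either is uniformly bounded or coarsely coincides with the projection to $g\C\ell$. In the second case the two quasilines carry the same coarse median on this image. Using $G$-equivariance, reduce to $U=\ell$ and replace $H$ by a coset $hH$. The gate $\gate_\ell(hH)$ is coarsely a coset of a parabolic subgroup of $\Stab{G}{\ell}=W_{\{v_+,v_-\}}\times W_{\link{\Gamma}{v_\pm}}$ obtained as an intersection, namely $W_{\Lambda}$ with $\Lambda$ a join of pieces of the Dihedral factors. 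I would then split into two cases.

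\emph{Case (a): some factor is $\{v_+,v_-\}$.} Then the remaining factors lie in $\link{\Gamma}{v_\pm}$ and form a join. A product of at least two infinite dihedral groups inside $W_{\link{\Gamma}{v_\pm}}$, or a single finite/dihedral one, has bounded orbits on $\C\ell^\orth$. This holds because $\ell^\orth$ is not a join, so a proper join subgraph is nested in a proper domain and projects boundedly by bounded geodesic image / orthogonality. Hence $\psi$ vanishes on that part by \Cref{cor:bounded_orbit_implies_trivial}, while $\psi(T)=0$ as $T\in\ker p_\ell$. Thus $\theta_\lambda=\phi$ on the gated coset, up to bounded error, and $L_\lambda$ coarsely agrees with $\C\ell$ there.

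\emph{Case (b): no factor is $\{v_+,v_-\}$.} Then $\pi_\ell(hH)$ is bounded. Also $\pi_{\ell^\orth}(\gate_\ell(hH))$ is either bounded, or is the projection of a join subgroup, which is again bounded by the same non-join argument. By \Cref{claim:what_determines_pilambda}, $\pi_{\ell_\lambda}(hH)$ is bounded.

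The main obstacle is exactly this second step: making the gate computation precise, and proving uniformly that every join parabolic inside $W_{\link{\Gamma}{v_\pm}}$ (including single dihedral factors, which are joins $\{a\}*\{b\}$ only after removing edges) has uniformly bounded orbits in $\C\ell^\orth$. I would first try to use the factored contact graph description: a parabolic subgroup on a subgraph that is a join or a single vertex is coned off in $\C\ell^\orth$, so its orbits have diameter at most $2$. Uniformity then follows from finitely many orbits of such subgraphs.
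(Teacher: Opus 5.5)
Your coordinatewise comparison of $\mu_\lambda$ with $\mu_\frakS$ only controls triples lying in $H$. It therefore yields $\mu_\lambda(H^3)\subseteq N_C(H)$ and the equivalence of the restrictions. However, the statement also asserts median-quasiconvexity in the paper's sense, which requires $\mu_\lambda(x,y,z)\in N_K(H)$ for $x,y\in H$ and \emph{arbitrary} $z$. You leave this as a sketch, or propose dropping it. It cannot be obtained by comparison with the cubical structure: for $z\notin H$ the point $\mu_\lambda(x,y,z)$ is in general far from both $\mu_\frakS(x,y,z)$ and $\mu_\lambda(x,y,\gate_H(z))$, where $\gate_H$ is the cubical gate. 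For instance, let $\lambda\neq 0$, $H=W_{\{v_+,v_-\}}$, $x=1$, $y=T^k$ and $z=g^l$, with $g\in W_{\link{\Gamma}{v_\pm}}$ and $\psi(g)\neq 0$. Since $g$ has bounded orbits on $\C\ell$, $\theta_\lambda(g^l)$ is coarsely $\lambda l\psi(g)$. So for suitable $k,l$ the $L_\lambda$-coordinate of $\mu_\lambda(x,y,z)$ has $\theta_\lambda$-value coarsely $\lambda l\psi(g)$, arbitrarily far from that of $1$. On the other hand, no hyperplane separating $1$ from $g^l$ crosses the subcomplex spanned by $H$. Hence $\gate_H(z)=1$, $\mu_\lambda(x,y,\gate_H(z))=1$, and $\mu_\frakS(x,y,z)$ is coarsely $1$. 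This is exactly the mechanism behind \Cref{thm:uncountably_many_CM}. Here $\mu_\lambda(x,y,z)$ is still close to $H$ (to a suitable $T^j$), but you must prove closeness to $H$ directly, not closeness to $\mu_\frakS(x,y,z)$.

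One way to do this with your own tools is the following. For $m=\mu_\lambda(x,y,z)$ with $x,y\in H$, check that each \emph{original} coordinate $\pi_U(m)$, $U\in\frakS$, is uniformly close to $\pi_U(H)$. Then use that $H$, being convex, is hierarchically quasiconvex for $\frakS$.
\begin{itemize}
\item For $U\notin G\cdot\ell$, $\pi_U(m)$ is close to a geodesic from $\pi_U(x)$ to $\pi_U(y)$, hence to the quasiconvex set $\pi_U(H)$.
\item For $U=g\ell$, your gate analysis shows that $\pi_{g\ell^\orth}(H)$ is uniformly bounded. It also shows that $\pi_{g\ell}(H)$ is either coarsely all of $\C(g\ell)$ or uniformly bounded. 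In the bounded case, $\pi_{g\ell_\lambda}(H)$ is bounded by \Cref{claim:what_determines_pilambda}, so $m$ is uniformly close to $x$ both in $gL_\lambda$ and in $\C(g\ell^\orth)$. The argument for the uniqueness axiom in \Cref{prop:new_hhg_structures} then bounds $\dist_{g\ell}(m,x)$.
\end{itemize}

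Apart from this, your proof of the equivalence is correct in outline and takes a different route from the paper. The paper first moves $\mu(a,b,c)$ and $\mu_\lambda(a,b,c)$ into $H$ using quasiconvexity, and then only checks the domains $[D_i]$. For $[D_i]=\ell$ it suffices that $H\le\Stab{G}{\ell}$ and $\psi|_H\equiv 0$. Your direct comparison avoids needing quasiconvexity first and does not use that each $[D_i]$ is a domain, which the paper's argument implicitly assumes. The cost is that you must analyse $\gate_\ell(hH)$ for every coset. Two small corrections there:
\begin{itemize}
\item On a gated coset $kW_{\Lambda'}$, the difference $\theta_\lambda-\phi=\lambda\psi$ is coarsely the constant $\lambda\psi(k)$, up to $|\lambda|$ times the defect of $\psi$. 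This is a coset-dependent translation rather than a bounded error. It is harmless for medians on a quasiline, but you should state it that way.
\item A non-adjacent pair inside $\link{\Gamma}{v_\pm}$ is not a join. It is nevertheless bounded in $\C\ell^\orth$ because it spans a domain properly nested in $\ell^\orth$, and so it is coned off. The relevant fact is that $P_V$ projects close to $\rho^V_U$ whenever $V\propnest U$, not bounded geodesic image.
\end{itemize}
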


\begin{proof} Let $\{v_i,w_i\}_{i=1,\ldots, k}$ be a collection of pairs of disjoint vertices spanning a join in $\Gamma$, let $D_i=W_{\{v_i,w_i\}}$, $\mathcal U=\{[D_i]\}\subseteq \frakS$, and let $H=\prod_{i=1}^k D_i$. Our goal is to prove that 
\[ \sup_{a,b,c\in H}\dist_{W_{\Gamma}}(\mu(a,b,c), \mu_\lambda(a,b,c))<+\infty,\]
where $\mu=\mu_0$ is the standard cubical coarse median, induced by the original HHG structure $(\Sigma_\Gamma,\frakS)$. By the uniqueness axiom in $(\Sigma_\Gamma,\frakS_\lambda)$, it will be enough to show that $m\coloneq \mu(a,b,c)$ and $m_\lambda\coloneq \mu_\lambda(a,b,c)$ project uniformly close to each $W\in\frakS_\lambda$.

Notice that, in both HHG structures, $H$ acts geometrically on the product region for $\mathcal U$, defined as in \Cref{defn:prodreg}. This is because the only domains in $\frakS_\lambda$ (resp. $\frakS$) where the projection of $H$ is not uniformly bounded are the elements of $\mathcal U$, and each $D_i$ acts geometrically on the associated quasiline and with uniformly bounded orbits on the others (if some $U_i$ is a translate of $\ell$, we are using that $T$ acts loxodromically on $L_\lambda$ and trivially on its translates, by the ``furthermore" part of \Cref{prop:new_hhg_structures}). Notice also that $P_{\mathcal U}$ is median-quasiconvex, by how the coarse median is constructed in \Cref{ex:coarse_median_from_HHS}. Thus, up to modifying both coarse median structures by a uniformly bounded amount, we can assume that $m,m_\lambda\in H$. 

Hence we are left to show that $m$ and $m_\lambda$ project uniformly close to all elements of $\mathcal U$. If $[D_i]\not \in W_\Gamma\cdot \ell$, then the projection $\pi_{[D_i]}$ is the same as in the original structure, and therefore $\pi_{[D_i]}(m)=\pi_{[D_i]}(m_\lambda)$. Suppose instead that $[D_i]=g \ell$ for some $g\in W_\Gamma$, and notice that we can choose $g=1$ as $\ell$ is the equivalence class of a standard Dihedral subgroup. Notice that $H\le \Stab{W_\Gamma}{\ell}$, and that the quasimorphism $\psi\colon \Stab{W_\Gamma}{\ell}\to \R$ from \Cref{construction:newline} is trivial on $H$ by \Cref{cor:bounded_orbit_implies_trivial}, so $\theta_0|_H\equiv \theta_\lambda|_H=\phi_H$. In particular, if without loss of generality $\phi(a)\le \phi(b)\le\phi(c)$, then the projection of $m$ to $\C \ell$ (resp. $m_\lambda$ to $L_\lambda$) uniformly coarsely coincides with the projection of $b$. Finally, since $m\in H$, we also have that $\pi_{\C\ell^\orth}(m)$ and $\pi_{\C\ell^\orth}(b)$ coarsely coincide; hence \Cref{claim:what_determines_pilambda} implies that $\pi_{L_\lambda}(m)$ coarsely coincides with $\pi_{L_\lambda}(b)$, which in turn coarsely coincides with $\pi_{L_\lambda}(m_\lambda)$.
\end{proof}

\subsection{Right-angled Artin groups}
In the same spirit as \Cref{thmintro_coxeter}, we can exhibit a family of right-angled \emph{Artin} groups admitting uncountably many coarse median structures. The combinatorial conditions translate those from \Cref{thmintro_coxeter} to the Artin setting:

\artin*

\begin{proof}
    The HHG structure for $A_\Gamma$ comes from the action on the universal cover of the Salvetti complex $S_\Gamma$. Let $\mathcal F$ be the collection of full subgraphs of $\Gamma$ containing $\Gamma$ and all non-empty subgraphs of links of vertices, which is clearly closed under non-empty intersections and therefore yields a HHG structure by \cite[Proposition 8.3]{HHS_I}. The index set $\frakS$ is again the collection $\{gA_\Lambda\}/\sim$, where $g\in A_\Gamma$, $\Lambda\in \mathcal F$, and both $\mathcal F$ and the equivalence relation $\sim$ are defined as in the Coxeter case. The factored contact graph of $[A_\Lambda]$ satisfies the following properties:
    \begin{itemize}
        \item If $\Lambda$ has at least two vertices and is not a proper join then $\C [A_\Lambda]$ is unbounded.
        \item If $\Lambda$ is a single vertex, then $\C [A_\Lambda]$ is a line, on which $A_\Lambda\cong \Z$ acts geometrically;
    \end{itemize}
    The proof of \Cref{thmintro_coxeter} now runs verbatim, with the following minor adjustments. The domain $[A_{\{v\}}]$ is $\nest$-minimal because $\{v\}$ is minimal under inclusion among subgraphs of $\Gamma$. Moreover, we set $T=v$, and notice that $A_{\link{\Gamma}{v}}=\Stab{A_\Gamma}{v}$ acts non-elementarily on the unbounded hyperbolic space $\C [A_{\link{\Gamma}{v}}]$ since ${\link{\Gamma}{v}}$ is not a single vertex nor a proper join.
\end{proof}

\noindent Finally, the proof of \Cref{cor:new_median_is_the_same_on_DT_RACG} runs verbatim in our current setting, to prove that the new coarse medians all agree when restricted to Abelian parabolic subgroups:
\begin{lem}\label{cor:new_median_is_the_same_on_DT_RAAG}
    Let $\Delta\subseteq \Gamma$ be a simplex, and let $H=A_\Delta$. For all $\lambda\in \R$ let $\mu_\lambda$ be the coarse median induced by $(S_\Gamma,\frakS_\lambda)$. Then $H$ is $\mu_\lambda$-quasiconvex, and the restriction of $\mu_\lambda$ to $H$ is equivalent to the standard cubical coarse median.
\end{lem}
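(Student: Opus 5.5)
The plan is to adapt the proof of \Cref{cor:new_median_is_the_same_on_DT_RACG} verbatim. Write $\Delta=\{v_1,\ldots,v_k\}$, so that $H=A_\Delta\cong\Z^k$ is generated by the pairwise commuting vertex generators. Let $\mathcal U=\{[A_{\{v_i\}}]\}_{i=1}^k$, which is a pairwise orthogonal collection in $\frakS$ (and in $\frakS_\lambda$, since the domain set and relations are unchanged by \Cref{prop:new_hhg_structures}). Let $\mu=\mu_0$ be the cubical coarse median from $(S_\Gamma,\frakS)$. By the uniqueness axiom for $(S_\Gamma,\frakS_\lambda)$, it will suffice to show that for $a,b,c\in H$ the points $m=\mu(a,b,c)$ and $m_\lambda=\mu_\lambda(a,b,c)$ project uniformly close in every domain of $\frakS_\lambda$.

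First, I would show that $H$ acts coboundedly on the product region $P_{\mathcal U}$ in both structures. The only domains onto which $H$ has unbounded projection are the elements of $\mathcal U$. Indeed, each $v_i$ acts loxodromically on the line $\C[A_{\{v_i\}}]$ and with uniformly bounded orbits on every other domain: $v_i$ fixes all domains orthogonal to $[A_{\{v_i\}}]$ and acts boundedly on them, while on transverse or containing domains the orbit stays near a relative projection. If some $[A_{\{v_i\}}]$ is a translate of $\ell=[A_{\{v\}}]$, I would instead use the ``furthermore'' part of \Cref{prop:new_hhg_structures}: $T$ is loxodromic on $L_\lambda$ and bounded on orthogonal translates. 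By \Cref{ex:coarse_median_from_HHS}, $P_{\mathcal U}$ is median-quasiconvex for both medians. Hence $H$ is $\mu_\lambda$-quasiconvex, and after a uniformly bounded modification we may assume $m,m_\lambda\in H$. Since every $H$-element projects boundedly outside $\mathcal U$, it then remains only to compare $m$ and $m_\lambda$ on the domains of $\mathcal U$.

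For $[A_{\{v_i\}}]\notin G\cdot\ell$, the projection is unchanged, so $m$ and $m_\lambda$ agree there up to bounded error, because both medians are coarse medians of the same three projections. For a translate of $\ell$, the key step is to move into $\Stab{A_\Gamma}{\ell}$. Up to conjugating, and since $H$ is standard, I would take $v\in\Delta$, so $\ell=[A_{\{v\}}]$ and $H\le\Stab{A_\Gamma}{\ell}=A_{\Cstar{\Gamma}{v}}$. The quasimorphism $\psi$ factors through $H/\langle v\rangle=A_{\Delta-\{v\}}$, which fixes the bounded set $\pi_{\ell^\orth}$ of a point; more precisely, it acts with a bounded orbit on $\C\ell^\orth$, since it lies in the abelian parabolic subgroup whose projection to $\C[A_{\link{\Gamma}{v}}]$ is bounded because $\Delta-\{v\}$ is a simplex. \Cref{cor:bounded_orbit_implies_trivial} then gives $\psi|_H\equiv0$, so $\theta_\lambda|_H=\phi|_H$. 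Ordering $\phi(a)\le\phi(b)\le\phi(c)$, both $\pi_\ell(m)$ and $\pi_{\ell_\lambda}(m_\lambda)$ coarsely equal the corresponding projections of $b$. Since $m\in H$ has bounded $\C\ell^\orth$-distance from $b$, \Cref{claim:what_determines_pilambda} then gives $\pi_{L_\lambda}(m)\sim\pi_{L_\lambda}(b)\sim\pi_{L_\lambda}(m_\lambda)$.

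I expect the main obstacle to be the justification that $\psi$ vanishes on $H$, namely that $A_{\Delta-\{v\}}$ has bounded orbits in the factored contact graph $\C[A_{\link{\Gamma}{v}}]$. The route I would try first is via the HHG structure itself: a simplex in $\link{\Gamma}{v}$ spans a product region of orthogonal domains properly nested in $\link{\Gamma}{v}$, so its projection to the top-level space of $(A_{\link{\Gamma}{v}},\frakS_{\ell^\orth})$ is coarsely a point, namely the relative projection.
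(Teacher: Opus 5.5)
Your argument follows the paper's own route: the paper proves this lemma by saying the proof of \Cref{cor:new_median_is_the_same_on_DT_RACG} runs verbatim, and you have transcribed that proof. You even supply the bounded-orbit justification of $\psi|_H\equiv 0$ that the paper leaves implicit, and your comparison on the domains of $\mathcal U$ is correct.

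However, the step ``$H$ acts coboundedly on $P_{\mathcal U}$'' is false whenever $\Delta$ is not a maximal simplex; you inherit it from the paper's RACG argument. The definition of $P_{\mathcal U}$ only constrains domains that are transverse to, or properly contain, some $U_i$. Domains orthogonal to every $U_i$, for instance $[A_{\link{\Gamma}{\Delta}}]$ and everything nested in it, are unconstrained, so $P_{\mathcal U}$ coarsely contains $A_\Delta\times A_{\link{\Gamma}{\Delta}}$. Concretely, let $\Gamma$ be the path $a-b-c$ with $v=b$, which satisfies the hypotheses of \Cref{thmintro_artin}, and take $\Delta=\{b\}$. Then $P_{\mathcal U}=P_\ell$ is coarsely $\Stab{A_\Gamma}{\ell}=A_\Gamma\cong\Z\times F_2$, whereas $H=\langle b\rangle$. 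Knowing that $H$ projects boundedly to every domain outside $\mathcal U$ tells you $H$ is small in those coordinates. It does not tell you that every point of $P_{\mathcal U}$ is near $H$. So neither ``$H$ is $\mu_\lambda$-quasiconvex'' nor ``we may assume $m,m_\lambda\in H$'' follows as written.

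The repair uses only facts you already have, plus the uniqueness axiom. Given $h,h'\in H$ and $z\in A_\Gamma$, let $m'=\mu_\lambda(h,h',z)$.
\begin{itemize}
\item For $W\in\frakS_\lambda-\mathcal U$ the set $\pi_W(H)$ is uniformly bounded. For a translate $g\ell\notin\mathcal U$, use \Cref{claim:what_determines_pilambda}, noting that $g\ell^\orth\notin\mathcal U$ because $\link{\Gamma}{v}$ has at least two vertices. Hence $\pi_W(m')$, a coarse median of two uniformly close points and a third, lies uniformly close to $\pi_W(H)$.
\item For each $U_i\in\mathcal U$ choose $n_i$ with $\pi_{U_i}(v_i^{n_i})$ close to $\pi_{U_i}(m')$. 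This is possible because $v_i$ is loxodromic on the quasiline $\C U_i$ (on $L_\lambda$ if $U_i=\ell$).
\item The subgroup $A_{\Delta-\{v_i\}}$ has uniformly bounded projection to $U_i$ in either structure; for $U_i=\ell$ this again follows from the Claim. Hence $h''=\prod_i v_i^{n_i}\in H$ coarsely agrees with $m'$ in every coordinate, and is uniformly close to $m'$ by uniqueness.
\end{itemize}
This proves that $H$ is $\mu_\lambda$-quasiconvex. After that, your reduction to $m,m_\lambda\in H$ and the rest of your argument go through unchanged. For the equivalence statement alone you do not even need that reduction: $\pi_{\ell^\orth}(m)$ is a coarse median of three uniformly close points, hence near $\pi_{\ell^\orth}(b)$, which is all that \Cref{claim:what_determines_pilambda} requires.
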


\bibliography{biblio}
\bibliographystyle{alpha}
\end{document}